\numberwithin{equation}{section}
\theoremstyle{plain}
\newtheorem{theorem}{Theorem}[section]
\newtheorem{lemma}[theorem]{Lemma}
\newtheorem{remark}[theorem]{Remark}
\newtheorem{proposition}[theorem]{Proposition}
\begin{document}

\begin{frontmatter}
\title{Diffusion approximations via Stein's method and time changes}
\runtitle{Diffusion approximations via Stein's method}

\begin{aug}
\author{\fnms{Miko{\l}aj J.} \snm{Kasprzak}\ead[label=e1]{kasprzak@stats.ox.ac.uk}},

\runauthor{Miko{\l}aj J. Kasprzak}

\affiliation{University of Oxford}

\address{Miko{\l}aj Kasprzak\\
Department of Statistics\\
University of Oxford\\
24-29 St Giles'\\
Oxford, OX1 3LB\\
United Kingdom\\
\printead{e1}\\
\phantom{E-mail:\ }}
\end{aug}

\begin{abstract}
We extend the ideas of \cite{diffusion} and use Stein's method to obtain a bound on the distance between a scaled time-changed random walk and a time-changed Brownian Motion. We then apply this result to bound the distance between a time-changed compensated scaled Poisson process and a time-changed Brownian Motion. 

This allows us to bound the distance between a process whose dynamics resemble those of the Moran model with mutation and a process whose dynamics resemble those of the Wright-Fisher diffusion with mutation upon noting that the former may be expressed as a difference of two time-changed Poisson processes and the diffusive part of the latter may be expressed as a time-changed Brownian Motion.

The method is applicable to a much wider class of examples satisfying the Stroock-Varadhan theory of diffusion approximation (\cite{stroock}). 
\end{abstract}

\begin{keyword}[class=MSC]
\kwd[Primary ]{60B10}
\kwd{60F17}
\kwd[; secondary ]{60J70, 60J65, 60E05, 60E15}
\end{keyword}

\begin{keyword}
\kwd{Stein's method}
\kwd{functional convergence}
\kwd{time-changed Brownian Motion}
\kwd{Moran model}
\kwd{Wright-Fisher diffusion}
\end{keyword}

\end{frontmatter}
\section{Introduction}
In his seminal paper \cite{stein}, Charles Stein introduced a method for proving normal approximations and obtained a bound on the speed of convergence to the standard normal distribution. He observed that a random variable $Z$ has the standard normal law if and only if $\mathbbm{E}Zf(Z)=\mathbbm{E}f'(Z)$ for all smooth functions $f$. Therefore, if, for a random variable $W$ with mean $0$ and variance $1$, $\mathbbm{E}f'(W)-\mathbbm{E}Wf(W)$ is close to zero for a large class of functions $f$, then the law of $W$ should be approximately Gaussian. He then proposed that, instead of evaluating $|\mathbbm{E}h(W)-\mathbbm{E}h(Z)|$ directly for a given function $h$, one can first find an $f=f_h$ solving the following \textit{Stein equation}:
$$f'(w)-wf(w)=h(w)-\mathbbm{E}h(Z)$$
and then find a bound on $|\mathbbm{E}f'(W)-\mathbbm{E}Wf(W)|$. This approach often turns out to be much easier, due to some bounds on the solutions $f_h$, which can be derived in terms of the derivatives of $h$. Since then, Stein's method has been significantly developed and extended to approximations by distributions other than normal.

The aim of Stein's method is to find a bound on the quantity $|\mathbbm{E}_{\nu_n}h-\mathbbm{E}_{\mu}h|$, where $\mu$ is the target (known) distribution, $\nu_n$ is the approximating law and $h$ is chosen from a suitable class of real-valued test functions $\mathcal{H}$. The idea is to find an operator $\mathcal{A}$ acting on a class of real-valued functions such that $$\left(\forall f\in\text{Domain}(\mathcal{A})\quad\mathbbm{E}_{\nu}\mathcal{A}f=0\right)\quad \Longleftrightarrow \quad\nu=\mu,$$
where $\mu$ is our target distribution. In the next step, for a given function $h\in\mathcal{H}$, a solution $f=f_h$ to the following Stein equation:
$$\mathcal{A}f=h-\mathbbm{E}_{\mu}h$$
is sought and its properties studied. Finally, using various mathematical tools (among which the most popular are Taylor's expansions in the continuous case, Malliavin calculus, as described in \cite{nourdin}, and coupling methods), a bound is sought for the quantity $|\mathbbm{E}_{\nu_n}\mathcal{A}f_h|$.

An accessible account of the method can be found, for example, in the surveys \cite{reinert} and \cite{ross} as well as the books \cite{janson} and \cite{normal_approx}, which treat the cases of Poisson and normal approximation, respectively, in detail. \cite{swan} is a database of information and publications connected to Stein's method.

Approximations by infinite-dimensional laws have not been covered in the Stein's method literature very widely, with the notable exceptions of \cite{diffusion},  \cite{functional_combinatorial} and recently \cite{Coutin}. We will focus on the ideas taken from \cite{diffusion}, which provides bounds on the Brownian Motion approximation of a one-dimensional scaled random walk and some other one-dimensional processes including scaled sums of locally dependent random variables and examples from combinatorics. In the sequel, we show that the approach presented in \cite{diffusion} can be extended to time-changes of Brownian Motion, including diffusions in the natural scale.

The most important example we apply our theory to is the approximation of the Moran model with mutation by the Wright-Fisher diffusion with mutation. The former, first introduced in \cite{moran} as an alternative for the Wright-Fisher model (first formally described in \cite{wright}) is one of the simplest and most important models of the genetic drift, i.e. the change in the frequencies of alleles in a population. It assumes that the population is divided into two allelic types (A and a) and the frequency of each of the alleles is governed by a birth-death process and an independent mutation process. Specifically, in a population of size $n$, at exponential rate ${n\choose 2}$, a pair of genes is sampled uniformly at random. Then one of them is selected at random to die and the other one gives birth to another gene of the same allelic type. In addition, every gene of type a changes its type independently at rate $\nu_2$ and every gene of type A changes its type independently at rate $\nu_1$. The model then looks at the proportion of a-genes in the population.

On the other hand, the Wright-Fisher model is a discrete Markov chain and does not allow for overlapping generations. Specifically, each step represents a generation. In generation $k$ each of the $n$ individuals chooses its parent independently, uniformly at random from the individuals present in generation $k-1$ and inherits its genetic type. This model also then looks at the proportion of a-genes in the population.

The Moran model turns out to be easier to study mathematically. It may be proved, for instance using the Stroock-Varadhan theory of diffusion approximation (see \cite{stroock}), that it converges weakly to the Wright-Fisher diffusion, which is also a scaling limit of the Wright-Fisher model. We show how to put a bound on the speed of this convergence. The Wright-Fisher diffusion is often used in practice in genetics for inference concerning large populations. It is given by the equation:
$$dM(t)=\gamma(M(t))dt+\sqrt{M(t)(1-M(t))}dB_t,$$
where $\gamma:[0,1]\to\mathbbm{R}$ encompasses mutation.  For a discussion of probabilistic models in genetics see \cite{etheridge}.

In Section \ref{section2} we introduce the space of test functions we will find the bounds for. In Section \ref{section3} we present our main reuslts. Theorem \ref{time_change_theorem} shows how the approach in \cite{diffusion} can be extended to the approximation of a scaled, time-changed random walk by a time-changed Brownian Motion. In Theorem \ref{theorem10.1} we apply Theorem \ref{time_change_theorem} to look at the distance between a time-changed Poisson Process and a time-changed Browian Motion. Theorem \ref{theorem_moran} shows how this can be extended to find the speed of convergence of a process whose dynamics resemble those of the Moran model with mutation to one whose dynamics resemble those of the Wright-Fisher diffusion with mutation. The bound obtained therein makes it possible to analyse the impact the mutation rates and the number of individuals have on the quality of the approximation and the interplay between those parameters. Section \ref{section4} provides proofs of the reults presented in Section \ref{section2} and comments on how the proof of Theorem \ref{theorem_moran} may be adapted to find the speed of convergence in other examples satisfying the Stroock-Varadhan theory of diffusion approximation (see \cite{stroock}).

In what follows, $\|\cdot\|$ will always denote the sup norm and $D=D[0,1]=D\left([0,1],\mathbbm{R}\right)$ will be the Skorokhod space of c\`adl\`ag real-valued functions on $[0,1]$.
\section{Space M}\label{section2}
Let us define:
$$\|f\|_L:=\sup_{w\in D[0,1]}\frac{|f(w)|}{1+\|w\|^3}\text{,}$$
and let $L$ be the Banach space of the continuous functions $f:D[0,1]\to\mathbbm{R}$ such that $\|f\|_L<\infty$. We now let $M\subset L$ consist of the twice Fr\'echet differentiable functions $f$, such that:
\begin{equation}\label{space_m}
\|D^2f(w+h)-D^2f(w)\|\leq k_f\|h\|\text{,}
\end{equation}
for some constant $k_f$, uniformly in $w,h\in D[0,1]$. By $D^kf$ we mean the $k$-th Fr\'echet derivative of $f$ and the $k$-linear norm $B$ on $L$ is defined to be $\|B\|=\sup_{\lbrace h:\|h\|=1\rbrace} |B[h,...,h]|$. Note the following:
\begin{lemma}
For every $f\in M$, $\|f\|_M<\infty$, where:
\begin{align*}
\|f\|_M:=&\sup_{w\in D[0,1]}\frac{|f(w)|}{1+\|w\|^3}+\sup_{w\in D[0,1]}\frac{\|Df(w)\|}{1+\|w\|^2}+\sup_{w\in D[0,1]}\frac{\|D^2f(w)\|}{1+\|w\|}\\
&+\sup_{w,h\in D[0,1]}\frac{\|D^2f(w+h)-D^2f(w)\|}{\|h\|}.
\end{align*}
\end{lemma}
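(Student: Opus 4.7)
The statement decomposes into showing that each of the four suprema defining $\|f\|_M$ is finite. The first is finite by hypothesis since $f\in L$, and the fourth is bounded above by $k_f$ by the defining property \eqref{space_m} of the space $M$. The substance of the lemma is therefore to control the second and third suprema, which concern the growth of $Df$ and $D^2f$ in terms of $\|w\|$. The overall strategy is a two-step bootstrap: use the Lipschitz assumption on $D^2f$ to bound $\|D^2f(w)\|$ linearly in $\|w\|$, and then integrate this bound once (along the segment from $0$ to $w$) to obtain a quadratic bound on $\|Df(w)\|$.

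\textbf{Step 1 (bound on $D^2f$).} Apply \eqref{space_m} with $w$ replaced by $0$ and $h$ replaced by $w$ to obtain
\[
\|D^2f(w)\|\;\le\;\|D^2f(0)\|+k_f\|w\|.
\]
Here $\|D^2f(0)\|$ is finite because Fr\'echet differentiability of $Df$ at $0$ forces $D^2f(0)$ to be a bounded bilinear form on $D[0,1]$. Dividing by $1+\|w\|$ shows the third supremum is at most $\max\!\bigl(\|D^2f(0)\|,\,k_f\bigr)$.

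\textbf{Step 2 (bound on $Df$).} Since $f$ is twice Fr\'echet differentiable and $D^2f$ is continuous (being Lipschitz), the Banach-space fundamental theorem of calculus gives, for every $w\in D[0,1]$ and every unit-norm $h\in D[0,1]$,
\[
Df(w)[h]-Df(0)[h]\;=\;\int_0^1 D^2f(sw)[w,h]\,ds.
\]
Taking absolute values, using the multilinear-norm inequality $|D^2f(sw)[w,h]|\le\|D^2f(sw)\|\,\|w\|\,\|h\|$, and inserting the bound from Step 1 yields
\[
\|Df(w)\|\;\le\;\|Df(0)\|+\|D^2f(0)\|\,\|w\|+\tfrac{k_f}{2}\|w\|^2,
\]
where $\|Df(0)\|<\infty$ by Fr\'echet differentiability at $0$. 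Dividing by $1+\|w\|^2$ shows the second supremum is finite as well. Combining this with Step 1 and the two trivial bounds noted at the start completes the proof.

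\textbf{Main obstacle.} There is no real difficulty: the argument is a routine application of calculus on Banach spaces. The only point requiring a small word of justification is the fundamental theorem of calculus in Step 2, which is valid because the map $s\mapsto D^2f(sw)$ is continuous (Lipschitz, by \eqref{space_m}) as an $L(D[0,1]\times D[0,1];\mathbb R)$-valued function, so the Bochner integral on the right-hand side is well defined.
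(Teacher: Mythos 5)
Your proposal is correct and follows essentially the same route as the paper: bound $\|D^2f(w)\|$ linearly via the Lipschitz condition \eqref{space_m} applied at $0$, then propagate this to a quadratic bound on $\|Df(w)\|$ (you use the Banach-space fundamental theorem of calculus where the paper invokes the mean value inequality, an immaterial difference), with the first and fourth suprema handled exactly as in the paper by $f\in L$ and \eqref{space_m} respectively.
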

\begin{proof}
Note that for $f\in M$ it is possible to find a constant $K_f$ satisfying:
\begin{align}
\text{A)}&\quad\|Df(w)\|\leq \|Df(w)-Df(0)\|+\|Df(0)\|\nonumber\\
\stackrel{\text{MVT}}{\leq}& \|w\|\sup_{\theta\in[0,1]}\|D^2f(\theta w)\|+\|Df(0)\|\nonumber\\
\leq& \|w\|\left[\sup_{\theta\in[0,1]}\left(\|D^2f(\theta w)-D^2f(0)\|+\|D^2f(0)\|\right)\right]+\|Df(0)\|\nonumber\\
\stackrel{(\ref{space_m})}\leq&\|w\|\left[k_f\sup_{\theta\in[0,1]}\|\theta w\|+\|D^2f(0)\|\right]+\|Df(0)\|\nonumber\\
\leq &k_f\|w\|^2+\|D^2f(0)\|(1\vee\|w\|^2)+\|Df(0)\|< K_f(1+\|w\|^2)\text{;}\nonumber\\
\text{B)}&\quad\|D^2f(w)\|\leq\|D^2f(w)-D^2f(0)\|+\|D^2f(0)\|\nonumber\\
\stackrel{(\ref{space_m})}{\leq}&k_f\|w\|+\|D^2f(0)\|<K_f(1+\|w\|)\text{;}\nonumber\\
\text{C)}&\quad\left|f(w+h)-f(w)-Df(w)[h]-\frac{1}{2}D^2f(w)[h,h]\right|\leq K_f\|h\|^3\text{,}
\label{first_der}
\end{align}
uniformly in $w,h\in D$, where the last inequality follows by Taylor's theorem and $(\ref{space_m})$. Therefore:
\begin{align*}
\|f\|_M=&\sup_{w\in D}\frac{|f(w)|}{1+\|w\|^3}+\sup_{w\in D}\frac{\|Df(w)\|}{1+\|w\|^2}+\sup_{w\in D}\frac{\|D^2f(w)\|}{1+\|w\|}\\
&+\sup_{w,h\in D}\frac{\|D^2f(w+h)-D^2f(w)\|}{\|h\|}<\infty.
\end{align*}
\end{proof}
We now let $M^0\subset M$ be the class of functionals $g\in M$ such that:
$$\|g\|_{M^0}:=\|g\|_M+\sup_{w\in D}|g(w)|+\sup_{w\in D}\|Dg(w)\|+\sup_{w\in D}\|D^2g(w)\|<\infty\text{.}$$
This is Proposition 3.1 of \cite{functional_combinatorial}:
\begin{proposition}\label{prop_m}
Suppose that, for each $n\geq 1$, the random element $Y_n$ of $D[0,1]$ is piecewise constant with intervals of constancy of length at least $r_n$. Let $\left(Z_n\right)_{n\geq 1}$ be random elements of $D[0,1]$ converging weakly in $D[0,1]$, with respect to the Skorokhod topology, to a random element $Z\in C\left([0,1],\mathbbm{R}\right)$. If:
\begin{equation}\label{assumption}
|\mathbbm{E}g(Y_n)-\mathbbm{E}g(Z_n)|\leq C\tau_n\|g\|_{M^0}
\end{equation}
for each $g\in M^0$ and if $\tau_n\log^2(1/r_n)\xrightarrow{n\to\infty}0$, then $Y_n\Rightarrow Z$ in $D[0,1]$ (weakly in the Skorokhod topology).
\end{proposition}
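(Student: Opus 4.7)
The plan is to reduce weak convergence of $Y_n$ to $Z$ in the Skorokhod topology to a comparison of expectations against test functions in $M^0$, via a Gaussian smoothing of bounded Lipschitz functionals. Because $Z$ takes values in $C([0,1],\mathbbm{R})$, it suffices to prove that $\mathbbm{E}h(Y_n)\to\mathbbm{E}h(Z)$ for every bounded, Skorokhod-Lipschitz $h: D[0,1]\to\mathbbm{R}$, and the assumed weak convergence $Z_n\Rightarrow Z$ reduces this further to showing $\mathbbm{E}h(Y_n)-\mathbbm{E}h(Z_n)\to 0$.

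The first main step is a smoothing construction. Fix a bounded Lipschitz $h$ and a parameter $\sigma>0$, and set $h_\sigma(w):=\mathbbm{E}h(w+\sigma G_n)$, where $G_n$ is a centred Gaussian element of $D[0,1]$ that is piecewise constant on a grid of mesh $r_n$ with i.i.d.\ standard Gaussian heights (so that $Y_n+\sigma G_n$ retains a compatible piecewise-constant structure and $G_n$ has an explicit covariance inverse on the finite-dimensional coordinate space). A Gaussian integration-by-parts/Stein identity then expresses the first, second and third Fr\'echet differentials of $h_\sigma$ as expectations of $h(w+\sigma G_n)$ weighted by suitable polynomials in the coordinates of $G_n$; estimating these via Cauchy--Schwarz shows that $h_\sigma\in M^0$ with a bound of the form $\|h_\sigma\|_{M^0}\le C_h\sigma^{-4}$, where $C_h$ depends on $\|h\|_\infty$ and the Lipschitz constant of $h$.

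With $h_\sigma\in M^0$ in hand, the hypothesis (\ref{assumption}) gives
\[
|\mathbbm{E}h_\sigma(Y_n)-\mathbbm{E}h_\sigma(Z_n)|\le C\tau_n\|h_\sigma\|_{M^0}=O(\tau_n\sigma^{-4}).
\]
For the smoothing errors $|\mathbbm{E}h(Y_n)-\mathbbm{E}h_\sigma(Y_n)|$ and $|\mathbbm{E}h(Z_n)-\mathbbm{E}h_\sigma(Z_n)|$, I would use that Lipschitz continuity of $h$ in the Skorokhod metric yields $|h(w)-h(w+v)|\le L\|v\|$ and hence, after Jensen/Fubini, an upper bound of $L\sigma\,\mathbbm{E}\|G_n\|$. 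Since $G_n$ is the maximum of at most $\lceil 1/r_n\rceil$ i.i.d.\ standard Gaussians, the classical subgaussian maximal inequality gives $\mathbbm{E}\|G_n\|=O(\sqrt{\log(1/r_n)})$, so the total smoothing error is $O(\sigma\sqrt{\log(1/r_n)})$.

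Combining the two bounds yields $|\mathbbm{E}h(Y_n)-\mathbbm{E}h(Z_n)|=O(\tau_n\sigma^{-4}+\sigma\sqrt{\log(1/r_n)})$. Optimising by choosing $\sigma^5=\tau_n/\sqrt{\log(1/r_n)}$ produces a bound of order $\tau_n^{1/5}(\log(1/r_n))^{2/5}$, which vanishes precisely under the hypothesis $\tau_n\log^2(1/r_n)\to 0$; letting $n\to\infty$ completes the argument. The main obstacle is the derivation of the $M^0$-bound $\|h_\sigma\|_{M^0}\lesssim\sigma^{-4}$: one has to differentiate the Gaussian convolution three times on the finite-dimensional grid coordinates, use integration by parts against the inverse covariance of $\sigma G_n$, and check that the resulting polynomial weights integrate to the claimed powers of $1/\sigma$ uniformly in $w$, while also verifying the additional boundedness conditions (uniform bounds on $h_\sigma$, $Dh_\sigma$ and $D^2h_\sigma$) that distinguish $M^0$ from $M$.
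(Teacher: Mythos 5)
There is a genuine gap, and it sits exactly at the step you yourself flag as ``the main obstacle'': the claim that the Gaussian smoothing $h_\sigma(w)=\mathbbm{E}h(w+\sigma G_n)$ lies in $M^0$ with $\|h_\sigma\|_{M^0}\leq C_h\sigma^{-4}$, with $C_h$ not depending on $n$. Two things go wrong. First, convolving only along the finite-dimensional subspace spanned by the grid indicators regularises $h$ only in those directions; for an increment $v\in D[0,1]$ that is not piecewise constant on your grid, $h_\sigma$ inherits no more smoothness than $h$ itself, so $h_\sigma$ is in general not even once Fr\'echet differentiable on $(D[0,1],\|\cdot\|)$, hence not in $M^0$ (and replacing $G_n$ by a genuinely infinite-dimensional Gaussian does not help: Cameron--Martin integration by parts gives directional smoothness only along the Cameron--Martin space, not along all of $D$). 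Second, even restricting to grid directions, the operator norms entering $\|\cdot\|_{M^0}$ are taken relative to the supremum norm, so the dual object produced by Gaussian integration by parts is an $\ell_1$-type weight: $Dh_\sigma(w)[v]=\sigma^{-1}\mathbbm{E}\bigl[h(w+\sigma G_n)\textstyle\sum_i v_iG_i\bigr]$, and $\sup_{\|v\|\leq 1}\bigl|\sum_i v_iG_i\bigr|=\sum_i|G_i|$ has expectation of order $1/r_n$. Lipschitz centring saves one power of $\sigma$ but not the dimension factor, so the derivative bounds degrade polynomially in $1/r_n$ rather than being uniform; no generic Gaussian-convolution argument gives the dimension-free (or even polylogarithmic) bound your optimisation $\sigma^5=\tau_n/\sqrt{\log(1/r_n)}$ requires. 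Since the whole derivation of the condition $\tau_n\log^2(1/r_n)\to 0$ rests on that unproved estimate, the proof as sketched does not go through.

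For comparison, the argument behind this statement (Proposition 3.1 of \cite{functional_combinatorial}, whose adaptation you can see in the paper's Appendix proof of Proposition \ref{prop_m1}) does not smooth arbitrary bounded Lipschitz test functions at all. It fixes open sets $B=\bigcap_{1\leq l\leq L}B_l$ with $B_l=\lbrace w:\|w-s_l\|<\gamma_l\rbrace$, $s_l$ continuous and $\mathbbm{P}[Z\in\partial B_l]=0$, and constructs explicit functionals $g^*_{l,n}\in M^0$ approximating the indicators of these sup-norm balls, built so that their $M^0$ norms grow only like powers of $\log(1/r_n)$; this is precisely where the piecewise constancy of $Y_n$ with spacing $r_n$ is exploited and where the hypothesis $\tau_n\log^2(1/r_n)\to 0$ is calibrated. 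One then combines (\ref{assumption}) applied to $g^*_{l,n}$, the assumed convergence $Z_n\Rightarrow Z$ and Fatou's lemma to get $\liminf_n\mathbbm{P}[Y_n\in B]\geq\mathbbm{P}[Z\in B]$, which suffices for $Y_n\Rightarrow Z$ because $Z$ is continuous. If you want to salvage your strategy, you would have to replace the generic Gaussian convolution by test functions with this kind of explicit structure (e.g.\ a smooth cut-off composed with a log-sum-exp smoothed supremum), so that the second derivative and its Lipschitz constant can be bounded by powers of $\log(1/r_n)$ uniformly over $D[0,1]$.
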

A similar result holds when $Y_n$ is a continuous-time Markov chain:
\begin{proposition}\label{prop_m1}
Suppose that, for each $n\geq 1$, the random element $Y_n$ of $D[0,1]$ is a contiuous-time Markov chain with mean holding time $\frac{1}{\lambda_n}\to 0$, identically distributed for each state. Let $\left(Z_n\right)_{n\geq 1}$ be random elements of $D[0,1]$ converging weakly in $D[0,1]$, with respect to the Skorokhod topology, to a random element $Z\in C\left([0,1],\mathbbm{R}\right)$. Suppose further that:
\begin{equation}\label{assumption1}
|\mathbbm{E}g(Y_n)-\mathbbm{E}g(Z_n)|\leq C\tau_n\|g\|_{M^0}
\end{equation}
for each $g\in M^0$ and that $\tau_n\log^2\left((\lambda_n)^3\right)\xrightarrow{n\to\infty}0$. Then $Y_n\Rightarrow Z$ in $D[0,1]$ (weakly in the Skorokhod topology).
\end{proposition}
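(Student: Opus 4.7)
The plan is to reduce to Proposition \ref{prop_m} by exploiting that, while the holding times of a CTMC with rate $\lambda_n$ are random, with probability tending to $1$ all holding times on $[0,1]$ are simultaneously bounded below by a deterministic threshold $r_n$; the proof of Proposition \ref{prop_m} can then be re-run conditional on this high-probability event.

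Concretely, I would first set $r_n := \lambda_n^{-3}$ and introduce the event
\[
A_n := \bigl\{\text{every holding time of } Y_n \text{ initiated before time } 1 \text{ has length at least } r_n\bigr\}.
\]
Since the number $N_n$ of jumps on $[0,1]$ is Poisson$(\lambda_n)$-distributed and the holding times are i.i.d.\ Exp$(\lambda_n)$, a union bound conditional on $N_n$ together with $1-e^{-\lambda_n r_n}\le \lambda_n r_n$ yields
\[
\mathbbm{P}(A_n^c)\le \mathbbm{E}[N_n]\cdot\lambda_n r_n+o(1)=O(\lambda_n^{-1})\xrightarrow{n\to\infty}0.
\]

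Next I would re-run the proof of Proposition \ref{prop_m} from \cite{functional_combinatorial}, which works by mollifying a bounded Skorokhod-continuous test function $h$ into a function $h_\epsilon\in M^0$ with $\|h_\epsilon\|_{M^0}\le C\epsilon^{-3}$ and controlling the smoothing error via the Skorokhod modulus of continuity of the process. Splitting
\[
|\mathbbm{E}h(Y_n)-\mathbbm{E}h(Z_n)|\le |\mathbbm{E}h(Y_n)-\mathbbm{E}h_\epsilon(Y_n)|+|\mathbbm{E}h_\epsilon(Y_n)-\mathbbm{E}h_\epsilon(Z_n)|+|\mathbbm{E}h_\epsilon(Z_n)-\mathbbm{E}h(Z_n)|,
\]
the middle term is bounded by $C\tau_n\epsilon^{-3}$ via (\ref{assumption1}); the first term is controlled on $A_n$ by the same modulus-of-continuity estimate as in the proof of Proposition \ref{prop_m}, which incurs only a $\log(1/r_n)=3\log\lambda_n$ penalty, while on $A_n^c$ it is bounded by $2\|h\|_\infty\mathbbm{P}(A_n^c)=o(1)$; the third term is controlled via the Skorokhod tightness of $(Z_n)$ which is forced by $Z_n\Rightarrow Z$ with continuous limit $Z$. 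Optimising $\epsilon$ in terms of $\tau_n$ then yields the conclusion precisely under $\tau_n\log^2(\lambda_n^3)\to 0$.

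The main obstacle will be verifying that the modulus-of-continuity and mollification estimates from the proof of Proposition \ref{prop_m}, originally formulated under a deterministic pathwise lower bound on intervals of constancy, remain valid when that lower bound is replaced by the high-probability event $A_n$; once this is established, the remainder is essentially bookkeeping in the splitting across $A_n$ and $A_n^c$.
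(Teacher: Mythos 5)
Your proposal follows essentially the same route as the paper's Appendix: fix $r_n=\lambda_n^{-3}$, show that with probability tending to one every holding time of $Y_n$ on $[0,1]$ is at least $r_n$, and re-run the argument of Proposition \ref{prop_m} (Proposition 3.1 of \cite{functional_combinatorial}) on that high-probability event, which is precisely where the hypothesis $\tau_n\log^2\left((\lambda_n)^3\right)\to 0$ is used. The only cosmetic difference is how that event's probability is estimated: the paper conditions on the number of waiting times and uses the uniform-spacings distribution on the simplex, whereas your union bound should be phrased by first truncating $N_n$ and applying it to the unconditional i.i.d.\ exponential holding times (conditioning on $N_n$ changes their law), a minor repair that does not affect the argument.
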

We provide a proof of Proposition \ref{prop_m1} in the Appendix.
\section{Main results}\label{section3}
Theorem \ref{time_change_theorem} below is an extension of Theorem 1 in \cite{diffusion} to the case of a time-changed scaled random walk: 
\begin{theorem}\label{time_change_theorem}
Let $X_1,X_2,...$ be i.i.d. with mean $0$, variance $1$ and finite third moment. Let $s:[0,1]\to[0,\infty)$ be a strictly increasing, continuous function with $s(0)=0$. Define:
$$Y_n(t)=n^{-1/2}\sum_{i=1}^{\lfloor ns(t)\rfloor} X_i,\quad t\in[0,1]$$
and let $(Z(t),t\in[0,1])=\left(B(s(t)),t\in[0,1]\right)$, where $B$ is a standard Brownian Motion. Suppose that $g\in M$.
Then:
\begin{align*}
|\mathbbm{E}g(Y_n)-\mathbbm{E}g(Z)|\leq &\|g\|_M\frac{30+54\cdot 5^{1/3}s(1)}{\sqrt{\pi\log 2}}n^{-1/2}\sqrt{\log (2s(1)n)}\\
&+\|g\|_Ms(1)\left(1+\left(\frac{3}{2}\right)^3\sqrt{\frac{2}{\pi}}s(1)^{3/2}\right)\mathbbm{E}|X_1|^3n^{-1/2}\\
&+\|g\|_M\frac{2160}{\sqrt{\pi}(\log 2)^{3/2}}n^{-3/2}(\log (2s(1)n))^{3/2}.
\end{align*}
\end{theorem}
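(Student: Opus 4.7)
The plan is to follow the blueprint of the proof of Theorem~1 in \cite{diffusion}, tracking how the time-change $s$ enters each estimate. Introduce an independent i.i.d.\ sequence $\tilde G_1,\tilde G_2,\ldots\sim N(0,1)$ and the intermediate Gaussian partial-sum process
$$W_n(t):=n^{-1/2}\sum_{i=1}^{\lfloor ns(t)\rfloor}\tilde G_i,\qquad t\in[0,1],$$
and split
$$|\mathbbm{E}g(Y_n)-\mathbbm{E}g(Z)|\leq|\mathbbm{E}g(Y_n)-\mathbbm{E}g(W_n)|+|\mathbbm{E}g(W_n)-\mathbbm{E}g(Z)|,$$
attacking each piece separately.

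For the first piece, I would run a Lindeberg-type swapping argument: letting $W_n^{(k)}$ denote the hybrid built from $X_1,\ldots,X_k$ followed by $\tilde G_{k+1},\tilde G_{k+2},\ldots$, I would telescope
$$\mathbbm{E}g(Y_n)-\mathbbm{E}g(W_n)=\sum_{i=1}^{\lfloor ns(1)\rfloor}\big[\mathbbm{E}g(W_n^{(i)})-\mathbbm{E}g(W_n^{(i-1)})\big]$$
and expand $g$ at each summand around the base process that zeros out position $i$. Independence, together with the matching of the first two moments of $X_i$ and $\tilde G_i$, kills the linear and quadratic contributions, and property (C) from the preceding lemma, applied with increment $n^{-1/2}X_i\mathbbm{1}_{[\tau_i,1]}$ (respectively $n^{-1/2}\tilde G_i\mathbbm{1}_{[\tau_i,1]}$) where $\tau_i=\inf\{t:\lfloor ns(t)\rfloor\geq i\}$, bounds the third-order remainder by a constant multiple of $\|g\|_M n^{-3/2}(\mathbbm{E}|X_1|^3+\mathbbm{E}|\tilde G_1|^3)$ per swap. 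Summing $\lfloor ns(1)\rfloor$ swaps and using $\mathbbm{E}|\tilde G_1|^3=2\sqrt{2/\pi}$ produces the middle term of the stated bound, the factor $s(1)$ arising as the effective number of increments per unit of $n$.

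For the second piece, I would couple $(B,W_n)$ by setting $\tilde G_i:=\sqrt{n}\big(B(i/n)-B((i-1)/n)\big)$, so that $W_n(t)=B(\lfloor ns(t)\rfloor/n)$ and hence
$$\|W_n-Z\|\leq\sup\big\{|B(u)-B(v)|:u,v\in[0,s(1)],\ |u-v|\leq 1/n\big\}.$$
A second-order Taylor expansion of $g$ around $Z$, combined with the estimates (A), (B), (C) of the preceding lemma, gives
$$|\mathbbm{E}g(W_n)-\mathbbm{E}g(Z)|\leq\|g\|_M\Big(\mathbbm{E}[(1+\|Z\|^2)\|W_n-Z\|]+\tfrac12\mathbbm{E}[(1+\|Z\|)\|W_n-Z\|^2]+\mathbbm{E}\|W_n-Z\|^3\Big).$$
Each expectation is then controlled via Cauchy--Schwarz, Doob's maximal inequality $\mathbbm{E}\|Z\|^2\leq 4s(1)$, and Gaussian maximal-increment bounds of the shape $\mathbbm{E}\sup_{|u-v|\leq1/n}|B(u)-B(v)|^p\leq C_p\big(n^{-1}\log(2s(1)n)\big)^{p/2}$. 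The $p=1$ contribution produces the first term of the bound, with $\sqrt{\log(2s(1)n)}$ and the $s(1)$-dependent prefactor $30+54\cdot 5^{1/3}s(1)$ coming from combining this inequality with the $\|Z\|^2$ control via Cauchy--Schwarz; the $p=3$ contribution yields the last term.

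The main obstacle will be extracting the explicit numerical constants claimed in the statement: the swapping step requires the precise Taylor-remainder constant inherited from (C), not just its order of magnitude, and the comparison with $Z$ relies on sharp Gaussian maximal inequalities over $[0,s(1)]$ tuned to recover the prefactors $30+54\cdot 5^{1/3}s(1)$, $(3/2)^3\sqrt{2/\pi}\,s(1)^{3/2}$, and $2160$. A conceptual point requiring care is that each $W_n^{(i)}-W_n^{(i-1)}$ has sup norm of order $n^{-1/2}$ but not smaller; consequently, only the expansion around the base process that is independent of both $X_i$ and $\tilde G_i$ produces a remainder cubic in $n^{-1/2}$, which is essential for an overall $n^{-1/2}$ rate. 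Otherwise the argument is a direct extension of the $s(t)=t$ case of \cite{diffusion}, with every appearance of $n$ inside logarithms or square roots in the error bound replaced by $ns(1)$.
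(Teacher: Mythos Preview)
Your outline is sound and would give a bound of the stated order, but the treatment of the first piece $|\mathbbm{E}g(Y_n)-\mathbbm{E}g(W_n)|$ is genuinely different from the paper's. The paper does \emph{not} run a Lindeberg swap on $g$; instead it builds Stein's method for the law of $W_n$ (their $A_n$): it identifies the generator $\mathcal{A}_n f(w)=-Df(w)[w]+\mathbbm{E}D^2f(w)[A_n^{(2)}]$ of an infinite-dimensional Ornstein--Uhlenbeck process with $\mathcal{L}(A_n)$ as stationary law, solves the Stein equation $\mathcal{A}_n f=g-\mathbbm{E}g(A_n)$ via $f=-\int_0^\infty T_{n,u}g\,du$, proves derivative bounds for this solution (Proposition~\ref{prop2.10}), and then bounds $|\mathbbm{E}\mathcal{A}_n f(Y_n)|$ by a leave-one-out expansion involving only $Y_n$ and its $j$-deleted versions $Y_n^j$. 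Your Lindeberg route is more elementary and in fact yields a \emph{cleaner} middle term, roughly $\tfrac{1}{6}\|g\|_M\,s(1)\big(\mathbbm{E}|X_1|^3+2\sqrt{2/\pi}\big)\,n^{-1/2}$; so your claim that the swap ``produces the middle term of the stated bound'' is not accurate. The factor $\big(1+(3/2)^3\sqrt{2/\pi}\,s(1)^{3/2}\big)$ in the paper's statement is an artifact of the Stein approach: it appears because one must pass from $\|g\|_M$ to $\|g-\mathbbm{E}g(A_n)\|_M$, which drags in $\mathbbm{E}\|A_n\|^3$ via Doob's $L^3$ inequality. What the paper's route buys is exactly the paper's thesis---that the Stein infrastructure of \cite{diffusion} extends to time-changed targets---and the solved Stein equation with its derivative estimates is then reused downstream. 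Your handling of the second piece $|\mathbbm{E}g(W_n)-\mathbbm{E}g(Z)|$ matches the paper's (coupling $W_n(t)=B(\lfloor ns(t)\rfloor/n)$, mean-value theorem, H\"older, Brownian modulus-of-continuity bounds, Doob for $\mathbbm{E}\|Z\|^3$).
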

In Theorem \ref{time_change_theorem} we do not claim that our bounds are sharp. Our bound in Theorem \ref{time_change_theorem} is of the same order as the one obtained in the original case in \cite{diffusion}. This result can also be extended in a straightforward way to instances in which the time change is random and independent of the step sizes of the random walk. We can obtain this by conditioning on the time change.

Theorem \ref{theorem10.1} below treats a time-changed Poisson process and can also be extended to random time changes, independent of the Poisson process of interest, by conditioning.
\begin{theorem}\label{theorem10.1}
Suppose that $P$ is a Poisson process with rate $1$ and $S^{(n)}:[0,1]\to[0,\infty)$ is a sequence of increasing continuous functions, such that $S^{(n)}(0)=0$. Let $S:[0,1]\to[0,\infty)$ be also increasing and continuous. Let $Z(t)=B(S(t)),t\in[0,1]$ where $B$ is a standard Brownian Motion and 
$$\tilde{Y}_n(t)=\frac{P\left(nS^{(n)}(t)\right)-nS^{(n)}(t)}{\sqrt{n}},\qquad t\in[0,1].$$
Then, for all $g\in M$:
\begin{equation*}
\begin{aligned}
\MoveEqLeft|\mathbbm{E}g(\tilde{Y}_n)-\mathbbm{E}g(Z)|\leq \|g\|_M\left\lbrace \left(2+\frac{27\sqrt{2}}{2\sqrt{\pi}}S(1)\right)\sqrt{\|S-S^{(n)}\|}+\frac{27\sqrt{2}}{2\sqrt{\pi}}\|S-S^{(n)}\|^{3/2}\right.\\
&+ n^{-1/2}\left[\frac{30+54\cdot 5^{1/3}S(1)}{\sqrt{\pi\log 2}}\sqrt{\log (2s(1)n)}\right.\\
&+\left(1+\left(\frac{3}{2}\right)^3\sqrt{\frac{2}{\pi}}S^{(n)}(1)^{3/2}\right)S^{(n)}(1)(1+2e^{-1})\left.+1+\frac{\log(1+2e^{-1})+2\log n}{\log\log(n+2)}\right]\\
&+n^{-1}\frac{9\sqrt{S^{(n)}(1)}}{2}\left(1+3nS^{(n)}(1)\right)^{1/2}\left[4+\frac{16701+128(\log n)^3}{(\log\log(n+3))^3}\right]^{1/3}\\
&+\left.n^{-3/2}\left[\frac{2160}{\sqrt{\pi}(\log 2)^{3/2}}(\log (2S(1)n))^{3/2}+8+\frac{33402+256(\log n)^3}{(\log\log(n+3))^3}\right]\right\rbrace.
\end{aligned}
\end{equation*}
\end{theorem}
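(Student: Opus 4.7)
I would reduce to Theorem \ref{time_change_theorem} by inserting two intermediate processes. Set $Y_i := P(i) - P(i-1) - 1$ for $i \geq 1$; these are i.i.d.\ centred Poisson(1) random variables of unit variance, and a direct calculation gives $\mathbbm{E}|Y_1|^3 = \mathbbm{E}(P(1) - 1)^3 + 2e^{-1} = 1 + 2e^{-1}$ (the third central moment of $\mathrm{Poisson}(1)$ equals $1$, with the correction between the signed and absolute third moment coming from the atom at $0$). Define
\begin{equation*}
W_n(t) := \frac{1}{\sqrt{n}}\sum_{i=1}^{\lfloor n S^{(n)}(t)\rfloor} Y_i = \frac{P(\lfloor n S^{(n)}(t)\rfloor) - \lfloor n S^{(n)}(t)\rfloor}{\sqrt{n}},
\end{equation*}
and let $Z^{(n)}(t) := B(S^{(n)}(t))$, coupled to $Z$ through the same Brownian motion $B$. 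The triangle inequality then splits the target into $\mathrm{(I)}+\mathrm{(II)}+\mathrm{(III)}$, where $\mathrm{(I)} := |\mathbbm{E} g(\tilde{Y}_n) - \mathbbm{E} g(W_n)|$, $\mathrm{(II)} := |\mathbbm{E} g(W_n) - \mathbbm{E} g(Z^{(n)})|$, and $\mathrm{(III)} := |\mathbbm{E} g(Z^{(n)}) - \mathbbm{E} g(Z)|$.

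Term $\mathrm{(II)}$ is immediate from Theorem \ref{time_change_theorem} applied with $X_i = Y_i$, $s = S^{(n)}$, and $\mathbbm{E}|X_1|^3 = 1 + 2e^{-1}$; this produces precisely the contributions in the stated bound that involve $\log(2S(1)n)$ and $(1 + 2e^{-1})$. For term $\mathrm{(III)}$, I would use the Mean Value Theorem together with property~(A) of the preceding Lemma to obtain $|g(Z^{(n)}) - g(Z)| \leq K_g(1 + (\|Z\| \vee \|Z^{(n)}\|)^2)\|Z^{(n)} - Z\|$, and then control $\|Z^{(n)} - Z\|$ through the Gaussian behaviour of $B(S^{(n)}(t)) - B(S(t))$, which is a Brownian increment of lag at most $\|S - S^{(n)}\|$; standard estimates give $\mathbbm{E}\|Z^{(n)}-Z\|^p \leq C_p \|S - S^{(n)}\|^{p/2}$, and combining with Doob's bound $\mathbbm{E}\|Z\|^q + \mathbbm{E}\|Z^{(n)}\|^q \leq C_q S(1)^{q/2}$ yields the $\sqrt{\|S-S^{(n)}\|}$ and $\|S-S^{(n)}\|^{3/2}$ contributions.

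Term $\mathrm{(I)}$ is the technical core. The remainder $R_n(t) := \tilde{Y}_n(t) - W_n(t) = n^{-1/2}\bigl(P(nS^{(n)}(t)) - P(\lfloor nS^{(n)}(t)\rfloor)\bigr) - n^{-1/2}\{nS^{(n)}(t)\}$ is small, so I would expand $g$ via property~(C) of the Lemma:
\begin{equation*}
|\mathbbm{E} g(\tilde{Y}_n) - \mathbbm{E} g(W_n)| \leq \bigl|\mathbbm{E} Dg(W_n)[R_n]\bigr| + \tfrac{1}{2}\bigl|\mathbbm{E} D^2 g(W_n)[R_n, R_n]\bigr| + K_g\, \mathbbm{E}\|R_n\|^3.
\end{equation*}
The first two terms are controlled by combining the derivative bounds $\|Dg(W_n)\| \leq K_g(1 + \|W_n\|^2)$ and $\|D^2g(W_n)\| \leq K_g(1 + \|W_n\|)$ with Doob's inequality for the martingale $\sqrt{n}\,W_n$ (giving $\mathbbm{E}\|W_n\|^p \leq C_p S^{(n)}(1)^{p/2}$) and with $L^p$ bounds on $\|R_n\| \leq n^{-1/2}(1 + M_n)$, where $M_n := \max_{0 \leq k < \lceil n S^{(n)}(1)\rceil}(P(k+1) - P(k))$ is the maximum of roughly $nS^{(n)}(1)$ i.i.d.\ Poisson(1) variables.

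The main obstacle is obtaining explicit moment estimates on $M_n$ that match the precise coefficients $[\log(1+2e^{-1}) + 2\log n]/\log\log(n+2)$ and $[16701 + 128(\log n)^3]^{1/3}/\log\log(n+3)$ that appear in the statement. I would combine Chernoff's inequality for $\mathrm{Poisson}(1)$ with a union bound over the $\lceil nS^{(n)}(1)\rceil$ unit intervals, optimise the threshold at scale $\log n / \log\log n$, and integrate the tail via the layer-cake formula to obtain sharp $L^1$ and $L^3$ bounds on $M_n$. Reassembling all three terms then yields the claimed inequality.
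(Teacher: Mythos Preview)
Your decomposition is close to the paper's but differs in one place that matters. The paper inserts \emph{four} waypoints rather than three: $\tilde Y_n \to Y_n \to A_n \to \tilde A_n \to Z$, where $A_n(t)=n^{-1/2}\sum_{i=1}^{\lfloor nS^{(n)}(t)\rfloor}Z_i$ and $\tilde A_n(t)=n^{-1/2}\sum_{i=1}^{\lfloor nS(t)\rfloor}Z_i$ are discrete Gaussian sums. The time-change swap $S^{(n)}\to S$ is performed at this discrete level, not at the level of the continuous process $B\circ S^{(n)}$ versus $B\circ S$. This is not cosmetic: your claim in (III) that ``standard estimates give $\mathbbm{E}\|Z^{(n)}-Z\|^p\leq C_p\|S-S^{(n)}\|^{p/2}$'' is not correct as stated. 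With $Z^{(n)}$ and $Z$ coupled through the same Brownian motion, $\sup_t|B(S^{(n)}(t))-B(S(t))|$ is controlled by the modulus of continuity of $B$ at lag $\|S-S^{(n)}\|$, and this carries an unavoidable factor $\sqrt{\log(1/\|S-S^{(n)}\|)}$ (cf.\ Lemma~3 of \cite{ito_processes}, which the paper itself invokes elsewhere). So your route yields a valid bound, but not the log-free $\sqrt{\|S-S^{(n)}\|}$ term in the statement. By contrast, $A_n-\tilde A_n$ is a Gaussian partial sum over at most $n\|S-S^{(n)}\|$ indices, and a Doob/Jensen argument gives the clean $2\sqrt{\|S-S^{(n)}\|}$. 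A related consequence is that the paper does \emph{not} apply Theorem~\ref{time_change_theorem} as a black box for your (II): it uses only the Stein half (inequality~(\ref{proof0.7})) with $s=S^{(n)}$ to pass from $Y_n$ to $A_n$, and only the discretisation half (inequality~(\ref{10.2})) with $s=S$ to pass from $\tilde A_n$ to $Z$. That is why the stated bound has $S^{(n)}(1)$ in the $(1+2e^{-1})$ term but $S(1)$ inside the logarithms; your direct application of Theorem~\ref{time_change_theorem} with $s=S^{(n)}$ would put $S^{(n)}(1)$ in both.

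On (I), your Taylor expansion to second order is more than is needed---the paper just uses the mean-value inequality and H\"older, exactly as in~(\ref{10.6}). For the maximum of i.i.d.\ Poisson(1) variables, the paper does not use Chernoff plus layer-cake: it applies Jensen to the convex functions $x\mapsto\exp(x\log\log(n+2))$ and $x\mapsto\exp(x^{1/3}\log\log(n+3))$ and evaluates the Poisson moment generating function exactly, which is what produces the specific constants $[\log(1+2e^{-1})+2\log n]/\log\log(n+2)$ and $[16701+128(\log n)^3]/(\log\log(n+3))^3$. A Chernoff approach would give the right order but not these coefficients.
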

\begin{remark}
The bound in Theorem \ref{theorem10.1} goes to $0$ as long as the time changes $S^n\to S$ uniformly.
\end{remark}
Theorem \ref{theorem_moran} below gives a bound on the speed of convergence of a process whose dynamics are similar to those of the Moran model with mutation to a process whose dynamics are similar to those of the Wright-Fisher diffusion with mutation. In the Moran model with mutation, in a population of size $n$, each individual carries a particular gene of one of the two forms: $A$ and $a$. Each individual has exactly one parent and offspring inherit the genetic type of their parent. Now, at exponential rate ${n\choose 2}$ a pair of genes is sampled uniformly at random from the population. One of the pair is selected at random to die and the other one splits in two. In addition, every individual of type A changes its type independently at rate $\nu_2$ and every individual of type a changes its type independently at rate $\nu_1$. 
\begin{theorem}\label{theorem_moran}
Let $X_n(t)$ be the proportion of type $a$ genes in the population at time $t\in[0,1]$ under the Moran model with mutation rates $\nu_1,\nu_2$, as described above. Let $(X(t),t\in[0,1])$ denote the Wright-Fisher diffusion given by:
$$dX(t)=(\nu_2-(\nu_1+\nu_2)X(t))dt+\sqrt{X(t)(1-X(t))}dB_t.$$

Suppose that:
\begin{equation}\label{intens}
\begin{cases}
\mathbbm{P}\left[X_n(t+\Delta t)-X_n(t)=\frac{1}{n}\right]=n^2R_1^{(n)}(t)\Delta t\\
\mathbbm{P}\left[X_n(t+\Delta t)-X_n(t)=\frac{1}{n}\right]=n^2R_{-1}^{(n)}(t)\Delta t
\end{cases}.
\end{equation}
and:
\begin{equation}\label{m_n_th}
M_n(t)=\frac{1}{n}P_1\left(n^2R_1^{(n)}(t)\right)-\frac{1}{n}P_{-1}\left(n^2R_{-1}^{(n)}(t)\right),
\end{equation}
for $t\in[0,1]$, where $P_1$ and $P_{-1}$ are i.i.d. rate $1$ Poisson processes, independent of $X_n$.

Suppose further that 
\begin{equation}\label{m_th}
M(t)=W\left(\int_0^tX(t)(1-X(t))ds\right)+\int_0^t(\nu_2-(\nu_1+\nu_2)X(s))ds,
\end{equation}
where $W$ is a standard Brownian Motion, independent of $X$.
Then, for any $g\in M$:
\begin{align*}
&\left|\mathbbm{E}g(M_n)-\mathbbm{E}g(M)\right|\\
\leq& \|g\|_M\left\lbrace \left(18+\nu_1^{1/2}+47\nu_1^{3/4}+31\nu_1^{3/2}+\nu_2+3\nu_2^2+9\nu_2^3\right)\right.\\
&\cdot\left(1.02\cdot 10^6+425\nu_2^{1/2}+623\nu_2+39\nu_2^{3/2}+7\nu_2^{5/2}\right)\\
&+ \left(12+3\nu_2+3\nu_2^2+9\nu_2^3\right)\left(1.02\cdot 10^6+425\nu_1^{1/2}+623\nu_1+39\nu_1^{3/2}+7\nu_1^{5/2}\right)\\
&\left.+7\left(\frac{1}{2}(1+2\nu_2)(\nu_1+\nu_2)+31(\nu_1+\nu_2)^3\right)\right\rbrace n^{-1/4}\\
&+\|g\|_M2112\left[\left(18+\nu_1^{1/2}+47\nu_1^{3/4}+31\nu_1^{3/2}+\nu_2+3\nu_2^2+9\nu_2^3\right)\left(\log\left(n^2/4+\nu_2n\right)\right)^{3/2}\right.\\
&+\left.\left(12+3\nu_2+3\nu_2^2+9\nu_2^3\right)\left(\log\left(n^2/4+\nu_1n\right)\right)^{3/2}\right]n^{-3}.
\end{align*}
\end{theorem}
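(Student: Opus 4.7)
The plan is to decompose $M_n$ into two compensated time-changed Poisson pieces plus a drift, apply Theorem~\ref{theorem10.1} conditionally to each Poisson piece, and then use independence to fuse the two resulting time-changed Brownian motions into the single time-changed Brownian motion appearing in $M$. Writing $\tilde{P}_i(\lambda):=P_i(\lambda)-\lambda$ for the compensated Poisson processes and using, for the Moran model,
\begin{equation*}
R_1^{(n)}(t)=\int_0^t\bigl[\tfrac12 X_n(s)(1-X_n(s))+\tfrac{\nu_2}{n}(1-X_n(s))\bigr]\,ds,
\end{equation*}
and similarly $R_{-1}^{(n)}(t)=\int_0^t[\tfrac12 X_n(s)(1-X_n(s))+\tfrac{\nu_1}{n}X_n(s)]\,ds$, one obtains
\begin{equation*}
M_n(t)=\frac{\tilde{P}_1(n^2R_1^{(n)}(t))}{n}-\frac{\tilde{P}_{-1}(n^2R_{-1}^{(n)}(t))}{n}+\int_0^t\bigl(\nu_2-(\nu_1+\nu_2)X_n(s)\bigr)\,ds.
\end{equation*}
The right-hand side already has the form of $M$, up to replacing the two compensated Poissons by a single time-changed Brownian motion and replacing $X_n$ by $X$ in the drift.

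To prepare for the first replacement, I would realise $X_n$ and $X$ on a common probability space (via a standard coupling through shared driving noise plus a Gr\"onwall estimate on the difference of their generators) so that $\mathbbm{E}\|X_n-X\|^2\le C(\nu_1,\nu_2)\,n^{-1}$. Conditional on $(X_n,X)$, the rate functions $R_\pm^{(n)}$ and $Q(t):=\int_0^t X(s)(1-X(s))/2\,ds$ are deterministic, and I would apply Theorem~\ref{theorem10.1} to each compensated Poisson piece with $m:=n^2$ in place of the theorem's $n$, with $S^{(m)}:=R_\pm^{(n)}$, and with $S:=Q$. This compares each $\tilde{P}_i(n^2R_i^{(n)})/n$ with $B_\pm(Q(\cdot))$ for independent standard Brownian motions $B_\pm$ independent of $(X_n,X)$. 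The intrinsic Poisson-to-Brownian contributions supply the $n^{-1}\sqrt{\log n}$ and $n^{-3}(\log n)^{3/2}$ terms of the stated bound (since $m^{-1/2}=n^{-1}$), while the $\sqrt{\|S-S^{(m)}\|}$ piece in Theorem~\ref{theorem10.1}, combined with Cauchy--Schwarz and the coupling estimate, produces the dominant $n^{-1/4}$ term.

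Finally, since $B_+$ and $B_-$ are independent and share the time change $Q$, in law one has $B_+(Q(t))-B_-(Q(t))=W\bigl(\int_0^t X(s)(1-X(s))\,ds\bigr)$ for a standard Brownian motion $W$; I would couple this $W$ to the Brownian motion driving $M$. The remaining intermediate process then differs from $M$ only in that $X_n$ appears in the drift in place of $X$, and the Fr\'echet--Taylor estimate~\eqref{first_der} implicit in $g\in M$ reduces this residual to a first-order error bounded by $(\nu_1+\nu_2)\mathbbm{E}\int_0^1|X_n(s)-X(s)|\,ds$, which the coupling absorbs into the $n^{-1/4}$ term. The main obstacle will be the coupling step itself: obtaining an explicit $L^2$ sup-norm estimate with the precise polynomial dependence on $\nu_1,\nu_2$ seen in the statement. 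Once that ingredient is in hand, collecting constants from the two applications of Theorem~\ref{theorem10.1} and the triangle inequality yields the full bound.
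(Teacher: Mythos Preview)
Your architecture is essentially the paper's: decompose $M_n$ into two compensated time-changed Poisson pieces plus the drift $I_n$, apply Theorem~\ref{theorem10.1} conditionally to each piece (with $m=n^2$), fuse the two independent Brownian motions $B_\pm(Q(\cdot))$ into $W(2Q(\cdot))$, and compare drifts. The paper also controls the shifted test functions $g^{(1)},g^{(-1)},g^{(0)}$ by bounding their $\|\cdot\|_M$ norms in terms of $\|g\|_M$ and moments of the shifts, which you glossed over but which is routine.

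The real gap is your coupling of $X_n$ and $X$. You propose ``shared driving noise plus a Gr\"onwall estimate,'' aiming for a sup-norm bound $\mathbbm{E}\|X_n-X\|^2\le C n^{-1}$. This is the wrong tool: $X_n$ is a pure-jump chain and $X$ a diffusion, so there is no common driving noise to share, and Gr\"onwall on generators does not directly compare a jump process to a diffusion pathwise. The paper instead uses the Donnelly--Kurtz look-down construction: one realises $X$ first and then, conditionally on $X(s)$, sets $nX_n(s)\sim\textbf{Binomial}(n,X(s))$. This gives immediate pointwise bounds such as $\mathbbm{E}[|X_n(s)-X(s)|\mid X(s)]\le\sqrt{X(s)(1-X(s))/n}$ and explicit formulas for the conditional variance of $X_n^2(s)$, which is exactly what produces the stated polynomial dependence on $\nu_1,\nu_2$. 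Crucially, the paper never needs a sup-norm estimate on $X_n-X$: since $\|R_i-R_i^{(n)}\|\le\int_0^1|\ldots|\,ds$, Jensen's inequality reduces everything to these pointwise binomial moments. Replace your Gr\"onwall step by the look-down coupling and your outline goes through.
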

\begin{remark}
If $\nu_1\geq 1$ and $\nu_2\geq 1$ then we can write:
\begin{align*}
&|\mathbbm{E}g(M_n)-\mathbbm{E}g(M)|\\
\leq& \|g\|_M\left[(18+79\nu_1^{3/2}+13\nu_2^3)(1.02\cdot 10^6+1094\nu_2^{5/2})\right.\\
&+(12+15\nu_2^3)(1.02\cdot 10^6+1094\nu_2^{5/2})\\
&+\left.7\left(31.5\nu_1^3+32.5\nu_2^3+\nu_1\nu_2(1+93\nu_1+93\nu_2)\right)\right]n^{-1/4}\\
&+\|g\|_M2112(18+79\nu_1^{3/2}+13\nu_2^{3})n^{-3}\left(\log\left(n^2/4+\nu_1n\right)\right)^{3/2}\\
&+\|g\|_M2112(12+15\nu_2^3)n^{-3}\left(\log\left(n^2/4+\nu_1n\right)\right)^{3/2}.
\end{align*}
The approximation gets worse as the mutation rates increase and the number of individuals decreases. Should we want to make the mutation rates depend on $n$ and be of the same order, we will require them to be $o\left(n^{1/22}\right)$ in order for the bound to converge to $0$ as $n\to\infty$.
\end{remark}
\begin{remark}
Note that the Moran model $X_n$ jumps up by $\frac{1}{n}$ with intensity $n^2R_1^{(n)}(t)$ and down by $\frac{1}{n}$ with intensity $n^2R_{-1}^{(n)}(t)$, as defined in (\ref{intens}). Using the ideas from \cite{kurtz} we can write $X_n$ in the following form:
$$X_n(t)=\frac{1}{n}\tilde{P}_1\left(n^2R_1^{(n)}(t)\right)-\frac{1}{n}\tilde{P}_{-1}\left(n^2R_{-1}^{(n)}(t)\right)$$
for some indepenent rate $1$ Poisson processes $\tilde{P}_1$ and $\tilde{P}_{-1}$ which are, however, not independent of $R_1^{(n)}$ and $R_{-1}^{(n)}$. In Theorem \ref{theorem_moran} we consider a similar process, given by (\ref{m_n_th}), whose definition uses Poisson processes $P_1$ and $P_{-1}$ independent of $R_1^{(n)}$ and $R_{-1}^{(n)}$.

Similarly, the diffusive part of the Wright-Fisher diffusion may be expressed as a time-changed Brownian Motion $\tilde{W}\left(\int_0^tX(t)(1-X(t)ds\right)$. However, we consider a process given by (\ref{m_th}) with the assumption that $W$ is independent of $X$.

We will  appeal to Theorem \ref{theorem10.1} to obtain the bounds. The time changes we apply in this case are random and therefore we will first condition on them.
\end{remark}
\begin{remark}
The Moran model $M_n$ converges weakly to the Wright-Fisher diffusion $M$, which can be proved using, for instance, the Stroock-Varadhan theory of diffusion approximation \cite{stroock}. However, our paper does not provide the tools necessary for treating this convergence with Stein's method and obtaining bounds on its rate.
\end{remark}
A key idea used in the proof will be the Donnelly-Kurtz look-down construction coming from \cite{donnelly_kurtz} and decribed in Chapter 2.10 of \cite{etheridge}. The $n$-particle look-down process is denoted by a vector $\left(\psi_1(t),...,\psi_n(t)\right)$ with each index representing a "level" and each of $\psi_i$'s representing the type of the individual at level $i$ at time $t$. Individual at level $k$ is equipped with an exponential clock with rate $k-1$, independent of other individuals, and at the times the clock rings it "looks down" at a level chosen uniformly at random from $\lbrace 1,...,k-1\rbrace$ and adopts the type of the individual at that level. In addition, the type of each individual evolves according to the mutation process. A comparison of the generators of the Moran model and the look-down process shows that, as long as the two are started from the same initial exchangeable condition, they produce the same distribution of types in the population. In addition, it may be shown that the Wright-Fisher diffusion may be represented as the proportion of type a individuals in the population, in which the types are distributed according to the infinite-particle look-down process. The corresponding Moran model is then the proportion of type a individuals among the ones located on the first $n$ levels. Due to exchangeability, we may then describe the Moran model $X_n$ at a fixed time, as depending on the Wright-Fisher diffusion $X$ in the following way: $nX_n(t)\sim\textbf{Binomial}(n,X(s))$.
\begin{remark}
Our bound in Theorem \ref{theorem_moran} is sufficient to conclude that process $M_n$ converges to process $M$ weakly on compact intervals. This follows from Proposition \ref{prop_m1}. Using the notation therein, in this case, $\tau_n=n^{-1/4}$ and $\lambda_n=\frac{n(n-1)}{2}$.
\end{remark}
\section{Setting up Stein's method}
Let us first define:
\begin{equation}\label{a_n}
A_n(t)=n^{-1/2}\sum_{i=1}^{\lfloor ns(1)\rfloor} Z_i\mathbbm{1}_{[i/n,s(1)]}(s(t))=n^{-1/2}\sum_{i=1}^{\lfloor ns(1)\rfloor} Z_i\mathbbm{1}_{[s^{-1}(i/n),1]}(t),
\end{equation}
for $Z_i\stackrel{\text{i.i.d}}\sim \mathcal{N}(0,1)$.
In a preparation for the proof of Theorem \ref{time_change_theorem}, we will apply Stein's method to find the distance between $A_n$ and $Y_n$.
\subsection{The Stein equation}
We first note that if $U_1,U_2,...$ are i.i.d. Ornstein-Uhlenbeck processes with stationary law $\mathcal{N}(0,1)$, then defining:
$$W_n(t,u)=n^{-1/2}\sum_{i=1}^{\lfloor ns(t)\rfloor} U_i(u),\quad u\geq 0,t\in[0,1],$$
we obtain that the law of $A_n$ is stationary for $\left(W_n(\cdot,u)\right)_{u\geq 0}$. Denote the generator of $\left(W_n(\cdot,u)\right)_{u\geq 0}$ by $\mathcal{A}_n$. By properties of stationary distributions,  $\mathbbm{E}_{\mu}\mathcal{A}_nf=0$ for all $f\in\text{Domain}(\mathcal{A}_n)$ if and only if $\mu=\mathcal{L}(A_n)$. Therefore, we can treat
\begin{equation} \label{stein_eq}
\mathcal{A}_nf=g-\mathbbm{E}g(A_n)
\end{equation}
as our Stein equation.

In the next subsection, for any $g$ from a suitable class of functions, we will find an $f$ satisfying equation (\ref{stein_eq}). Then, in the sequel, we will find a bound on $|\mathbbm{E}\mathcal{A}_nf(Y_n)|$, which will readily give us a bound on $|\mathbbm{E}g(Y_n)-\mathbbm{E}g(A_n)|$.

\begin{proposition}\label{approx_proposition}
The generator $\mathcal{A}_n$ of the process $(W_n(\cdot,u))_{u\geq 0}$ acts on any $f\in M$ in the following way:
$$(\mathcal{A}_nf)(w):=-Df(w)[w]+\mathbbm{E}D^2f(w)\left[A_n^{(2)}\right].$$
\end{proposition}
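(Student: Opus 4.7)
The plan is to reduce the infinite-dimensional generator computation to a finite-dimensional one by exploiting the explicit linear structure of $W_n$. First, setting $N = \lfloor n s(1) \rfloor$, I would introduce the deterministic elements $e_i \in D[0,1]$ by $e_i(t) = n^{-1/2}\mathbb{1}_{[s^{-1}(i/n),1]}(t)$ for $1 \le i \le N$. In this notation $W_n(\cdot, u) = \sum_{i=1}^{N} U_i(u)\, e_i$, and the initial configuration $A_n$ from \eqref{a_n} is exactly $A_n = \sum_{i=1}^{N} Z_i\, e_i$ with $Z_i \stackrel{\text{i.i.d.}}{\sim}\mathcal{N}(0,1)$.

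Next, for $f \in M$ I would define $F : \mathbb{R}^{N} \to \mathbb{R}$ by $F(x_1,\dots,x_N) = f\bigl(\sum_{i=1}^N x_i e_i\bigr)$. The chain rule, combined with the Fréchet differentiability of $f$, gives $\partial_i F(x) = Df(w)[e_i]$ and $\partial_i \partial_j F(x) = D^2 f(w)[e_i, e_j]$ whenever $w = \sum_i x_i e_i$; the polynomial-growth and Lipschitz bounds A), B), C) from the preceding lemma ensure that $F$ lies in the domain of the finite-dimensional OU generator. Because the $U_i$ are i.i.d. OU processes with stationary law $\mathcal{N}(0,1)$, each with one-dimensional generator $\mathcal{L}h(x) = -xh'(x) + h''(x)$, the generator of $(U_1(u),\dots,U_N(u))_{u \ge 0}$ acts on $F$ coordinate-wise and evaluating at $x = (x_1,\dots,x_N)$ produces
\[
\sum_{i=1}^{N}\bigl[-x_i\, \partial_i F(x) + \partial_i^2 F(x)\bigr] = \sum_{i=1}^{N}\bigl[-x_i\, Df(w)[e_i] + D^2 f(w)[e_i,e_i]\bigr].
\]

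Finally I would translate both sums back to coordinate-free form. For the first sum, linearity of the Fréchet derivative yields
\[
\sum_{i=1}^{N} x_i\, Df(w)[e_i] = Df(w)\Bigl[\sum_{i=1}^{N} x_i e_i\Bigr] = Df(w)[w].
\]
For the second, interpreting $A_n^{(2)}$ as the bilinear pairing $[A_n,A_n]$, bilinearity of $D^2f(w)$ together with $\mathbb{E}[Z_iZ_j] = \delta_{ij}$ gives
\[
\mathbb{E}\, D^2f(w)\bigl[A_n^{(2)}\bigr] = \sum_{i,j=1}^{N}\mathbb{E}[Z_i Z_j]\, D^2 f(w)[e_i,e_j] = \sum_{i=1}^{N} D^2 f(w)[e_i,e_i],
\]
which matches exactly the second sum above, producing the claimed identity.

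The only genuine obstacle is verifying that the generator of the $D[0,1]$-valued Markov process $(W_n(\cdot,u))_{u \ge 0}$ really coincides, when applied to $f$, with the finite-dimensional OU generator applied to $F$; since $w \mapsto f(w)$ depends on $U_1(u),\dots,U_N(u)$ only through the continuous linear map $x \mapsto \sum_i x_i e_i$, this follows from the standard semigroup identification, using the growth bounds A)–C) to justify differentiating under the expectation defining $\mathcal{A}_n f(w) = \lim_{u \downarrow 0} u^{-1}\bigl(\mathbb{E}[f(W_n(\cdot,u)) \mid W_n(\cdot,0) = w] - f(w)\bigr)$.
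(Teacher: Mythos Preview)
Your approach is genuinely different from the paper's. The paper first establishes the Mehler-type representation $(T_{n,u}f)(w)=\mathbbm{E}\left[f(we^{-u}+\sigma(u)A_n)\right]$ via Lemma~\ref{second_ad_lemma}, and then Taylor-expands $T_{n,u}f(w)-f(w)$ to second order directly in the Banach space $D[0,1]$, controlling the remainder by~(\ref{first_der}) to show it is $O(u^{3/2})$; the first-order contribution $\sigma(u)\,\mathbbm{E}Df(w)[A_n]$ vanishes because $\mathbbm{E}Z_k=0$. Your reduction to the $N$-dimensional Ornstein--Uhlenbeck generator is cleaner in that it avoids the infinite-dimensional Taylor expansion entirely and simply quotes a known formula; the paper's route is more self-contained but pays for this by tracking several remainder terms explicitly in~(\ref{trala}).

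There is, however, one point your argument does not cover. Your computation produces $(\mathcal{A}_nf)(w)$ only for $w$ lying in the span of $e_1,\dots,e_N$: the conditioning $W_n(\cdot,0)=w$ in your final paragraph is only meaningful for such $w$, and your function $F$ is defined on $\mathbbm{R}^N$, not on all of $D[0,1]$. The paper, by contrast, takes~(\ref{semigroup}) as the definition of the semigroup for \emph{every} $w\in D[0,1]$, and the generator identity is then established pointwise at every such $w$; this generality is genuinely used in the proof of Proposition~\ref{prop2.10}, where $T_{n,u}g(w)$ and $\mathcal{A}_n\phi_n(g)(w)$ are manipulated for arbitrary $w$. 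The gap is easy to close---once the finite-dimensional Mehler formula gives~(\ref{semigroup}) on the span, the right-hand side extends to all $w$ and differentiation at $u=0$ goes through exactly as in your chain-rule computation---but the extension should be made explicit rather than absorbed into ``standard semigroup identification''.
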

Before we prove this result, we need a lemma:
\begin{lemma}\label{second_ad_lemma}
Letting $\mathcal{F}_{n,u}=\sigma(W_n(\cdot,v),v\leq u)$, we have:
$$W_n(\cdot,u+v)-e^{-v}W_n(\cdot,u)\stackrel{\mathcal{D}}{=}\sigma(v)A_n(\cdot).$$
\end{lemma}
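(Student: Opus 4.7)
\textbf{Proof plan for Lemma \ref{second_ad_lemma}.} The plan is to exploit the explicit Markov structure of the stationary Ornstein--Uhlenbeck process and reduce the identity to a term-by-term rewriting. First I would recall that a stationary OU process with $\mathcal{N}(0,1)$ marginal admits the autoregressive decomposition
$$U_i(u+v)=e^{-v}U_i(u)+\sigma(v)\,\xi_i^{(u,v)},\qquad \sigma(v):=\sqrt{1-e^{-2v}},$$
where, for each fixed $u,v\geq 0$, the variable $\xi_i^{(u,v)}$ is $\mathcal{N}(0,1)$ and independent of $\mathcal{F}_{n,u}$. Since the processes $U_1,U_2,\dots$ are mutually independent, the family $\{\xi_i^{(u,v)}\}_{i\geq 1}$ is i.i.d.\ $\mathcal{N}(0,1)$ and independent of $\mathcal{F}_{n,u}$.

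Next I would substitute this decomposition into the definition of $W_n(\cdot,u+v)$ and sum over $i$. The $e^{-v}U_i(u)$ contributions reassemble into $e^{-v}W_n(\cdot,u)$, leaving, pointwise in $t\in[0,1]$ and on the same probability space,
$$W_n(t,u+v)-e^{-v}W_n(t,u)=\sigma(v)\,n^{-1/2}\sum_{i=1}^{\lfloor ns(t)\rfloor}\xi_i^{(u,v)}.$$

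Finally, I would identify the right-hand side, jointly in $t$, with $\sigma(v)A_n(\cdot)$. Since $s$ is strictly increasing and continuous with $s(0)=0$, for any integer $i\geq 1$ the inclusion $i\leq\lfloor ns(t)\rfloor$ is equivalent to $s^{-1}(i/n)\leq t$, so that
$$n^{-1/2}\sum_{i=1}^{\lfloor ns(t)\rfloor}\xi_i^{(u,v)}=n^{-1/2}\sum_{i=1}^{\lfloor ns(1)\rfloor}\xi_i^{(u,v)}\,\mathbbm{1}_{[s^{-1}(i/n),1]}(t),$$
exactly matching the definition (\ref{a_n}) of $A_n$ with the Gaussians $\xi_i^{(u,v)}$ in place of the $Z_i$'s. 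Because $(\xi_i^{(u,v)})_{i\geq 1}\stackrel{\mathcal{D}}{=}(Z_i)_{i\geq 1}$ as sequences of i.i.d.\ standard normals, the process on the right has the same law in $D[0,1]$ as $A_n(\cdot)$, which gives the claimed equality in distribution.

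The proof is essentially bookkeeping, so there is no real obstacle; the only point requiring mild care is the passage from pointwise-in-$t$ equality to equality of laws in $D[0,1]$, which is handled by observing that the two processes $t\mapsto n^{-1/2}\sum_{i=1}^{\lfloor ns(t)\rfloor}\xi_i^{(u,v)}$ and $A_n(t)$ are built from the same i.i.d.\ Gaussian sequence via the same deterministic step structure induced by $s$, so their finite-dimensional distributions (and hence their laws as random elements of $D[0,1]$) coincide.
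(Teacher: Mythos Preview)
Your proposal is correct and takes essentially the same approach as the paper: both reduce to the fact that $U_i(u+v)-e^{-v}U_i(u)$ is an i.i.d.\ family of $\mathcal{N}(0,\sigma^2(v))$ variables, then sum over $i$. The only cosmetic difference is that the paper establishes this OU identity via the Lamperti representation $U_i(u)=e^{-u}B_i(e^{2u})$ and Brownian increments, whereas you quote the autoregressive decomposition directly.
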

\begin{proof} We first note that for each $i\geq 1$ we can construct independent standard Brownian Motions $B_i$ such that $(X_i(u),u\geq 0)=(e^{-u}B_i(e^{2u}),u\geq 0)$. Then:

\begin{align*}
W_n(\cdot,u+v)-e^{-v}W(\cdot,u)=&n^{-1/2}\sum_{k=1}^{\lfloor ns(\cdot)\rfloor} U_k(u+v)-e^{-v}n^{-1/2}\sum_{k=1}^{\lfloor ns(\cdot)\rfloor} U_k(u)\\
\stackrel{\mathcal{D}}{=}&n^{-1/2}e^{-(u+v)}\sum_{k=1}^{\lfloor ns(\cdot)\rfloor}\left[B_k\left(e^{2(u+v)}\right)-B_k\left(e^{2u}\right)\right]\\
\stackrel{\mathcal{D}}{=}&n^{-1/2}\sigma(v)\sum_{k=1}^{\lfloor ns(\cdot)\rfloor}Z_k=\sigma(v)A_n(\cdot)\text{.}
\end{align*} 
\end{proof}
\begin{proof}[Proof of Proposition \ref{approx_proposition}]
Note that the semigroup of  $(W_n(\cdot,u))_{u\geq 0}$, acting on $L$, is defined by:
\begin{equation*}
(T_{n,u}f)(w):=\mathbbm{E}\left[f(W_n(\cdot,u)|W_n(\cdot,0)=w\right]
\end{equation*}
and by Lemma \ref{second_ad_lemma} we readily obtain that:
\begin{equation}\label{semigroup}
(T_{n,u}f)(w)=\mathbbm{E}\left[f(we^{-u}+\sigma(u)A_n(\cdot)\right]\text{.}
\end{equation}
We can define the generator by: $\mathcal{A}_n=\lim_{u\searrow 0}\frac{T_{n,u}-I}{u}$. We also have that for $f\in M$:
\begin{align*}
&\left|(T_{n,u}f)(w)-f(w)-\mathbbm{E}Df(w)[\sigma(u)A_n-w(1-e^{-u})]\right.\\
&-\left.\frac{1}{2}\mathbbm{E}D^2f(w)\left[\lbrace\sigma(u)A_n-w(1-e^{-u})\rbrace^{(2)}\right]\right|\\
\stackrel{(\ref{first_der})}{\leq}&K_f\mathbbm{E}\|\sigma(u)A_n-w(1-e^{-u})\|^3\\
\leq &4K_f\left[\sigma^3(u)\mathbbm{E}\|A_n\|^3+(1-e^{-u})^3\|w\|^3\right]
\leq K_3(1+\|w\|^3)u^{3/2}\\
\end{align*}
for a constant $K_3$ depending only on $f$, where the last inequality follows from the fact that for $u\geq 0$, $\sigma^3(u)\leq 3u^{3/2}$ and $(1-e^{-u})^3\leq u^{3/2}$.
So:

\begin{align}
&\left|(T_{n,u}f-f)(w)+uDf(w)[w]-u\mathbbm{E}D^2f(w)[A_n^{(2)}]\right|\nonumber \\
\leq&\left|(T_{n,u}f)(w)-f(w)-\mathbbm{E}Df(w)[\sigma(u)A_n-w(1-e^{-u})]\right.\nonumber\\
&-\left.\frac{1}{2}\mathbbm{E}D^2f(w)[\lbrace\sigma(u)A_n-w(1-e^{-u})\rbrace^{(2)}]\right|+\left|\sigma(u)\mathbbm{E}Df(w)[A_n]\right|\nonumber\\
&+\left|(u-1+e^{-u})Df(w)[w]\right|+\left|\left(\frac{\sigma^2(u)}{2}-u\right)\mathbbm{E}D^2f(w)[A_n^{(2)}]\right|\nonumber\\
&+\left|\frac{(1-e^{-u})^2}{2}D^2f(w)[w^{(2)}]\right|+\left|\sigma(u)(1-e^{-u})\mathbbm{E}D^2f(w)[A_n,w]\right|\nonumber\\
\leq &K_1\left(1+\|w\|^3\right)u^{3/2}+\left|\sigma(u)\mathbbm{E}Df(w)[A_n]\right|+|u-1+e^{-u}|\|Df(w)\|\|w\|\nonumber\\
&+\left|\frac{\sigma^2(u)}{2}-u\right|\|D^2f(w)\|\mathbbm{E}\|A_n\|^2\nonumber\\
&+\frac{(1-e^{-u})^2}{2}\|D^2f(w)\|\|w\|^2+\sigma(u)(1-e^{-u})\|D^2f(w)\|\|w\|\mathbbm{E}\|A_n\|\nonumber\\
\stackrel{(\ref{first_der})}\leq &K_1(1+\|w\|^3)u^{3/2}+\left|\sigma(u)\mathbbm{E}Df(w)[A_n]\right|+|u-1+e^{-u}|K_f(1+\|w\|^2)\|w\|\nonumber\\
&+\left|\frac{\sigma^2(u)}{2}-u\right|K_f(1+\|w\|)\mathbbm{E}\|A_n\|^2+\frac{(1-e^{-u})^2}{2}K_f(1+\|w\|)\|w\|^2\nonumber\\
&+\sigma(u)(1-e^{-u})(1+\|w\|)\|w\|\mathbbm{E}\|A_n\|\nonumber\\
\leq &3u^{3/2}\left(K_1(1+\|w\|^3)+K_f(1+\|w\|^2)\|w\|+K_f(1+\|w\|)\mathbbm{E}\|A_n\|^2\right.\nonumber\\
&\left.+K_f(1+\|w\|)\|w\|^2+(1+\|w\|)\|w\|\mathbbm{E}\|A_n\|\right)+\left|\sigma(u)\mathbbm{E}Df(w)[A_n]\right|\nonumber\\
\leq& K_4(1+\|w\|^3)u^{3/2}\text{,}
\label{trala}
\end{align}
for some constant $K_4$ depending only on $f$. The last inequality follows from the fact that:
\begin{align*}
\mathbbm{E}Df(w)[A_n]=&\mathbbm{E}Df(w)\left[n^{-1/2}\sum_{k=1}^{\lfloor ns(1)\rfloor}Z_k
\mathbbm{1}_{[s^{-1}(k/n),1]}\right]\\
=&n^{-1/2}\sum_{k=1}^{\lfloor ns(1)\rfloor}Df(w)\left[\mathbbm{1}_{[s^{-1}(k/n),1]}\right]\mathbbm{E}[Z_k]\\
=&0\text{.}
\end{align*}
It follows that for any $f\in M$:
$$\mathcal{A}_nf(w)=\lim_{u\searrow0}\frac{T_{n,u}f(w)-f(w)}{u}=-Df(w)[w]+\mathbbm{E}D^2f(w)\left[A_n^{(2)}\right]\text{.}$$
\end{proof}
\subsection{Solving Stein's equation}
\begin{proposition}\label{prop2.10}
Suppose that $g\in M$ satisfies $\mathbbm{E}g(A_n)=0$. Then the equation: $\mathcal{A}_nf=g$
is solved by:
\begin{equation}\label{phi}
f=\phi_n(g)=-\int_0^\infty T_{n,u}gdu
\end{equation}
for $T_{n,u}$ defined by (\ref{semigroup}).
Furthermore, $\phi_n(g)\in M$ and the following inequalities hold:
\begin{align}
&\text{A)}\quad\|D\phi_n(g)(w)\| \leq\left(1+\frac{4}{3}\mathbbm{E}\|A_n\|^2\right)\|g\|_M(1+\|w\|^2)\text{,}\nonumber\\
&\text{B)}\quad\|D^2\phi_n(g)(w)\|\leq \left(\frac{1}{2}+\frac{\mathbbm{E}\|A_n\|}{3}\right)\|g\|_M(1+\|w\|)\text{,}\nonumber\\
&\text{C)}\quad\|D^2\phi_n(g)(w+h)-D^2\phi(g)(w)\|\leq\frac{\|g\|_M}{3}\|h\|\text{.}\label{norm_bound}
\end{align}
\end{proposition}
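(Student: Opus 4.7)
The plan is to run a standard semigroup/generator calculation for the Ornstein--Uhlenbeck-type process $(W_n(\cdot,u))_{u\geq 0}$, exploiting the representation (\ref{semigroup}). First I would check that the integral (\ref{phi}) is absolutely convergent: since $\mathbbm{E}g(A_n)=0$, stationarity of $\mathcal{L}(A_n)$ under the semigroup gives $\mathbbm{E}g(A_n)=\mathbbm{E}(T_{n,u}g)(A_n')$ for an independent copy $A_n'$ of $A_n$, so
\[
T_{n,u}g(w) = \mathbbm{E}\bigl[g(we^{-u}+\sigma(u)A_n) - g(A_n'e^{-u}+\sigma(u)A_n)\bigr],
\]
and applying the Fr\'echet mean value theorem together with the growth bound $\|Dg(\cdot)\|\leq \|g\|_M(1+\|\cdot\|^2)$ yields an estimate of order $e^{-u}$ times a polynomial in $\|w\|$, $\mathbbm{E}\|A_n\|^2$, and $\mathbbm{E}\|A_n'\|^2$, which is integrable on $[0,\infty)$. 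Second, that $\mathcal{A}_n\phi_n(g)=g$ follows from the semigroup identity
\[
T_{n,u}\phi_n(g)(w)-\phi_n(g)(w) = -\int_u^\infty T_{n,v}g(w)\,dv + \int_0^\infty T_{n,v}g(w)\,dv = \int_0^u T_{n,v}g(w)\,dv,
\]
so that dividing by $u$ and letting $u\searrow 0$ gives $\mathcal{A}_n\phi_n(g)(w)=T_{n,0}g(w)=g(w)$ by continuity of $u\mapsto T_{n,u}g(w)$ at $0$.

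For the derivative bounds I would differentiate $T_{n,u}g$ under the expectation by the chain rule to obtain
\[
DT_{n,u}g(w)[h]=e^{-u}\mathbbm{E}Dg(we^{-u}+\sigma(u)A_n)[h],\qquad D^2T_{n,u}g(w)[h,k]=e^{-2u}\mathbbm{E}D^2g(we^{-u}+\sigma(u)A_n)[h,k],
\]
and then differentiate (\ref{phi}) under the integral using the dominated convergence theorem, justified by the uniform-in-$u$ polynomial bounds derived below. Inserting the growth bounds $\|Dg(w)\|\leq \|g\|_M(1+\|w\|^2)$ and $\|D^2g(w)\|\leq \|g\|_M(1+\|w\|)$ from the definition of $\|\cdot\|_M$, together with $\|we^{-u}+\sigma(u)A_n\|^2\leq 2e^{-2u}\|w\|^2+2\sigma^2(u)\|A_n\|^2$ and $\|we^{-u}+\sigma(u)A_n\|\leq e^{-u}\|w\|+\sigma(u)\|A_n\|$, reduces the estimation to the elementary scalar integrals
\[
\int_0^\infty e^{-3u}\,du=\tfrac{1}{3},\qquad \int_0^\infty e^{-u}\sigma^2(u)\,du=\tfrac{2}{3},\qquad \int_0^\infty e^{-2u}\sigma(u)\,du=\tfrac{1}{3},
\]
where $\sigma(u)=\sqrt{1-e^{-2u}}$ and the last integral is obtained by the substitution $v=1-e^{-2u}$. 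For the Lipschitz part (C) the difference $D^2T_{n,u}g(w+h)-D^2T_{n,u}g(w)$ directly inherits the Lipschitz constant of $D^2g$ from (\ref{space_m}), producing an extra $e^{-u}$ factor under the integral, so that
\[
\|D^2\phi_n(g)(w+h)-D^2\phi_n(g)(w)\|\leq \|g\|_M\|h\|\int_0^\infty e^{-3u}\,du=\tfrac{\|g\|_M}{3}\|h\|,
\]
matching the claimed coefficient exactly.

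The main obstacle I expect is not analytic but arithmetic: after the above computations one ends up with the raw bounds $\|g\|_M\bigl(1+\tfrac{2}{3}\|w\|^2+\tfrac{4}{3}\mathbbm{E}\|A_n\|^2\bigr)$ for $\|D\phi_n(g)(w)\|$ and $\|g\|_M\bigl(\tfrac{1}{2}+\tfrac{\|w\|+\mathbbm{E}\|A_n\|}{3}\bigr)$ for $\|D^2\phi_n(g)(w)\|$, and one must rearrange these into the factorised forms $(1+\tfrac{4}{3}\mathbbm{E}\|A_n\|^2)(1+\|w\|^2)$ and $(\tfrac{1}{2}+\tfrac{\mathbbm{E}\|A_n\|}{3})(1+\|w\|)$ stated in (A) and (B). Each of these is an elementary inequality (in each case the cross term on the right-hand side dominates the missing term on the left), but one has to keep track of the groupings to recover precisely the constants asserted. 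Given (A), (B) and (C), the membership $\phi_n(g)\in M$ is then immediate.
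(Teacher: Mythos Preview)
Your argument is correct and, for the derivative bounds (A), (B), (C), is essentially identical to the paper's: differentiate (\ref{semigroup}) under the integral, insert the growth bounds from $\|\cdot\|_M$, and evaluate the same scalar integrals, then absorb the raw estimates into the factorised forms exactly as you describe.

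Where you do differ is in verifying $\mathcal{A}_n\phi_n(g)=g$. The paper proceeds in a more laborious way: it first proves the finite-time identity $T_{n,t}g-g=\mathcal{A}_n\bigl(\int_0^t T_{n,u}g\,du\bigr)$, then establishes $\frac{d}{ds}T_{n,s}f=T_{n,s}\mathcal{A}_nf$ by separate right- and left-derivative arguments with dominated convergence, integrates, and finally passes to $t\to\infty$ with another DCT step. Your route---swap $T_{n,u}$ with $\int_0^\infty$ by Fubini (justified by your integrability estimate from the independent-copy trick), get $T_{n,u}\phi_n(g)-\phi_n(g)=\int_0^u T_{n,v}g\,dv$, divide by $u$ and use continuity of $v\mapsto T_{n,v}g(w)$ at $0$---is cleaner and reaches the same conclusion with fewer intermediate verifications. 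The paper's longer route has the minor advantage of also exhibiting the intermediate identity (\ref{4.100}) for finite $t$ and the commutation $\frac{d}{ds}T_{n,s}=T_{n,s}\mathcal{A}_n$, but neither is needed elsewhere in the paper, so your compression loses nothing. One small organisational point: since you invoke $\mathcal{A}_n$ acting on $\phi_n(g)$, you should either note that the pointwise limit you compute \emph{is} the definition of $\mathcal{A}_n\phi_n(g)(w)$, or reorder so that (A)--(C) (and hence $\phi_n(g)\in M$) are established before the generator step.
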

\begin{proof}
The proof will follow the procedure used to prove \cite[Proposition 1]{kasprzak}.

\textbf{Step 1}: First, we show that $\phi_n(g)\in M$ and that (\ref{norm_bound}) holds. Assume $g\in M$ and $\mathbbm{E}[g(A_n)]=0$. 
Note that if, for instance:
\begin{equation}\label{condition}
|g(w)-g(x)|\leq C_g(1+\|w\|^2+\|x\|^2)\|w-x\|
\end{equation}
uniformly in $w,x\in D[0,1]$ then:
\begin{align}
&\lim_{t\to\infty}\int_0^t\left|T_{n,u}g(w)\right|du=\lim_{t\to\infty}\int_0^t\left|\mathbbm{E}g(we^{-u}+\sigma(u)A_n)\right|du\nonumber\\
\leq&\lim_{t\to\infty}\left[\int_0^t\left|\mathbbm{E}\left[g(we^{-u}+\sigma(u)A_n)-g(\sigma(u)A_n)\right]\right|du+\int_0^t\left|\mathbbm{E}[g(\sigma(u)A_n)-g(A_n)]\right|du\right]\nonumber\\
\leq &C_g\lim_{t\to\infty}\left[\int_0^t\mathbbm{E}\left[\left(1+\|e^{-u}w+\sigma(u)A_n\|^2+\sigma^2(u)\|A_n\|^2\right)e^{-u}\|w\|\right]du\right.\nonumber\\
&\left.+\int_0^t\mathbbm{E}\left|(1+(\sigma^2(u)+1)\|A_n\|^2)\right|\left\|(\sigma(u)-1)A_n\right\|du\right] \nonumber\\
\leq& C_g\lim_{t\to\infty}\left[\int_0^t\left[ e^{-u}\|w\|+2e^{-3u}\|w\|^3+3\sigma^2(u)e^{-u}\|w\|\mathbbm{E}\|A_n\|^2\right]du\right.\nonumber\\
&\left.+\int_0^t(\sigma(u)-1)\mathbbm{E}\left|(1+(\sigma^2(u)+1)\|Z\|^2)\right|\left\|A_n\right\|du\right]\nonumber\\
\leq&C(1+\|w\|^3)\text{,}
\label{4.77}
\end{align}
for some constant $C$. For such $g$,
$f=\phi_n(g)=-\int_0^\infty T_{n,u}gdu$
exists.
Note that every $g\in M$ satisfies condition (\ref{condition}) because for such $g$:
\begin{align*}
&|g(w)-g(x)|\stackrel{(\ref{first_der})}\leq K_g\|w-x\|^3+\left|Dg(x)[w-x]+\frac{1}{2}D^2g(x)[w-x,w-x]\right| \\
\leq& K_g\|w-x\|^3+\|Dg(x)\|\|w-x\|+\frac{1}{2}\|D^2g(x)\|\|w-x\|^2 \\
\stackrel{(\ref{first_der})}\leq& K_g\|w-x\|\left(\|w-x\|^2+1+\|x\|^2+\frac{1}{2}\|w-x\|(1+\|x\|)\right)\\
\leq& K_g\|w-x\|\left(2\|w\|^2+2\|x\|^2+1+\|x\|^2+\frac{1}{2}(\|w\|+\|x\|+\|w\|\|x\|+\|x\|^2)\right)\\
\leq& C_g(1+\|w\|^2+\|x\|^2)\|w-x\|
\end{align*}
uniformly in $w,x$ because $\|w\|\leq 1+\|w\|^2$, $\|x\|\leq 1+\|x\|^2$ and $\|w\|\|x\|\leq \|w\|^2+\|x\|^2$. 

Now take $g\in M$, such that $\mathbbm{E}g(A_n)=0$ and note that for $\phi_n$ defined in (\ref{phi}) we get:
$$\phi_n(g)(w+h)-\phi_n(g)(w)\stackrel{(\ref{semigroup})}=-\mathbbm{E}\int_0^\infty\left[g\left((w+h)e^{-u}+\sigma(u)A_n\right)-g\left(we^{-u}+\sigma(u)A_n\right)\right]du$$
and so dominated convergence (which can be applied because of (\ref{4.77})) gives:
\begin{equation}\label{4.800}
D^k\phi_n(g)(w)=-\mathbbm{E}\int_0^{\infty}e^{-ku}D^kg(we^{-u}+\sigma(u)A_n)du,\quad k=1,2\text{.}
\end{equation}

Also, observe that:
\begin{align}
&\text{A)}\quad\|D\phi_n(g)(w)\|\nonumber\\
&\leq \int_{0}^\infty e^{-u}\mathbbm{E}\|Dg(we^{-u}+\sigma(u)A_n)\|du\nonumber\\
&\stackrel{(\ref{first_der})}\leq \|g\|_M\int_0^\infty e^{-u}(1+\mathbbm{E}\|we^{-u}+\sigma(u)A_n\|^2)du \nonumber\\
&\leq \|g\|_M\int_0^\infty \left(e^{-u}+2\|w\|^2e^{-3u}+2\mathbbm{E}\|A_n\|^2(e^{-u}-e^{-3u})\right)du\nonumber\\
&\leq \left(1+\frac{4}{3}\mathbbm{E}\|A_n\|^2\right)\|g\|_M(1+\|w\|^2)\text{,}\nonumber\\
&\text{B)}\quad\|D^2\phi_n(g)(w)\|\nonumber\\
&\leq \int_0^\infty e^{-2u}\mathbbm{E}\left\|D^2g(we^{-u}+\sigma(u)A_n)\right\|du\nonumber\\
&\stackrel{(\ref{first_der})}\leq \|g\|_M\int_0^\infty e^{-2u}(1+\mathbbm{E}\|we^{-u}+\sigma(u)A_n\|)du\nonumber\\
&\leq\|g\|_M\int_0^\infty e^{-2u}(1+e^{-u}\|w\|+\sigma(u)\mathbbm{E}\|A_n\|)du\nonumber\\
&\leq \|g\|_M\left(\frac{1}{2}+\frac{\|w\|}{3}+\frac{\mathbbm{E}\|A_n\|}{3}\right)\nonumber\\
&\leq \left(\frac{1}{2}+\frac{\mathbbm{E}\|A_n\|}{3}\right)\|g\|_M(1+\|w\|)\text{,}\nonumber\\
&\text{C)}\quad\|D^2\phi_n(g)(w+h)-D^2\phi_n(g)(w)\|\nonumber\\
&\leq\left\|\mathbbm{E}\int_0^\infty e^{-2u}D^2g((w+h)e^{-u}+\sigma(u)A_n)-e^{-2u}D^2g(we^{-u}+\sigma(u)A_n)du\right\|\nonumber\\
&\stackrel{g\in M,(\ref{space_m})}\leq\|g\|_M\int_0^\infty e^{-2u}e^{-u}\|h\|du=\frac{\|g\|_M}{3}\|h\|\text{,}
\label{4.78}
\end{align}
uniformly in $g\in M$, which proves (\ref{norm_bound}). It follows by (\ref{4.77}) and (\ref{4.78}) that $\phi_n(g)\in M$. Thus, $\phi_n(g)$ is in the domain of the semigroup and of $\mathcal{A}_n$. Also, similarly, for any $t>0$, $\int_0^tT_{n,u}gdu\in M$.

\textbf{Step 2}: We now show that for all $t>0$ and for all $g\in M$:
\begin{equation}\label{4.100}
T_{n,t}g-g=\mathcal{A}_n\left(\int_0^t T_{n,u}gdu\right)\text{.}
\end{equation}
We will prove it by following the steps of the proof of Proposition 1.5 on p. 9 of \cite{ethier}. Observe that for all $w\in D[0,1]$ and $h>0$:
\begin{align}
&\frac{1}{h}[T_{n,h}-I]\int_0^tT_{n,u}g(w)du=\frac{1}{h}\int_0^t[T_{n,u+h}g(w)-T_{n,u}g(w)]du\nonumber\\
=&\frac{1}{h}\int_t^{t+h}T_{n,u}g(w)du-\frac{1}{h}\int_0^hT_{n,u}g(w)du\nonumber\\
\stackrel{(\ref{semigroup})}=&\frac{1}{h}\int_t^{t+h}\mathbbm{E}[g(w e^{-u}+\sigma(u)A_n)]du-\frac{1}{h}\int_0^{h}\mathbbm{E}[g(w e^{-u}+\sigma(u)A_n)]du.
&\label{4.600}
\end{align}
Taking $h\to 0$ on the left-hand side gives $\mathcal{A}_n\left(\int_0^t T_{n,u}g(w)du\right)$ because $\int_0^t T_{n,u}g(w)du$ is in the domain of $\mathcal{A}_n$, as shown in \textbf{Step 1}. In order to analyse the right-hand side note that:

\begin{align}
&\left|\frac{1}{h}\int_0^h\mathbbm{E}[g(w e^{-u}+\sigma(u)A_n)]-g(w)du\right|\nonumber\\
\stackrel{\text{MVT}}\leq &\frac{1}{h}\int_0^h\mathbbm{E}\left[\|w(e^{-u}-1)+\sigma(u)A_n\|\sup_{c\in [0,1]}\|Dg\left(cw+(1-c)(we^{-u}+\sigma(u)A_n)\right)\|\right]du\nonumber\\
\leq&\frac{\|g\|_M}{h}\int_0^h\mathbbm{E}\left[\left(\|w\|(1-e^{-u})+\sigma(u)\|A_n\|\right)\left(1+3\|w\|^2+3\|w\|^2e^{-2u}+3\sigma^2(u)\|A_n\|^2\right)\right]du\nonumber\\
=&\frac{\|g\|_M}{h}\mathbbm{E}\left\lbrace\left(1+3\|w\|^2+3\|A_n\|^2\right)\left(\|w\|(-1+h+\cosh(h)-\sinh(h))\right.\right.\nonumber\\
&\left.+\|A_n\|e^{-h}(-\sqrt{e^{2h}-1}+e^h(h+\log(1+e^{-h}\sqrt{-1+e^{2h}}))\right)\nonumber\\
&+3\|w\|(\|w\|^2-\|A_n\|^2)\left(\frac{e^{-3h}}{6}(e^h-1)^2(e^h+2)\right)\nonumber\\
&+\left.3(\|w\|^2\|A_n\|-\|A_n\|^3)\frac{1}{3}\left(\sqrt{1-e^{-2h}}-\sqrt{e^{-6h}(e^{2h}-1)}\right)\right\rbrace\xrightarrow{h\to 0}0.
\label{4.200}
\end{align}
Similarly:
$$\left|\frac{1}{h}\int_t^{t+h}\mathbbm{E}\left[g(we^{-u}+\sigma(u)A_n)\right]du-\mathbbm{E}\left[g(we^{-t}+\sigma(t)A_n)\right]\right|\xrightarrow{h\to 0}0$$
Therefore, as $h\to 0$, the right-hand side of (\ref{4.600}) converges to $T_{n,t}g-g$, which proves (\ref{4.100}).

\textbf{Step 3}: We note that for any $h>0$ and for any $f\in M$:
$$\frac{1}{h}\left[T_{n,s+h}f-T_{n,s}f\right]=T_{n,s}\left[\frac{T_{n,h}-I}{h}f\right]$$
and therefore for any $w\in D[0,1]$, by dominated convergence (which can be applied because of (\ref{trala})):
\begin{align*}
\left(\frac{d}{ds}\right)^+T_{n,s}f(w)&=\lim_{h\searrow 0}T_{n,s}\left[\frac{T_{n,h}-I}{h}f(w)\right]=\lim_{h\searrow 0}\mathbbm{E}\left[\frac{T_{n,h}-I}{h}f(we^{-s}+\sigma(s)A_n)\right]\\
&=\mathbbm{E}\left[\lim_{h\searrow 0}\frac{T_{n,h}-I}{h}f(we^{-s}+\sigma(s)A_n)\right]=T_{n,s}\mathcal{A}_nf(w)
\end{align*}
and, similarly, for $s>0$, $\left(\frac{d}{ds}\right)^-T_{n,s}f=T_{n,s}\mathcal{A}_nf$ because:
\begin{align*}
&\lim_{h\searrow0}\frac{1}{-h}\left[T_{n,s-h}f-T_{n,s}f\right](w)-T_{n,s}\mathcal{A}_nf(w)\\
=&\lim_{h\searrow0}T_{n,s-h}\left[\left(\frac{T_{n,h}-I}{h}-\mathcal{A}_n\right)f\right](w)+\lim_{h\searrow0}\left(T_{n,s-h}-T_{n,s}\right)\mathcal{A}_nf(w)\\
=&\lim_{h\searrow 0}\mathbbm{E}\left[\left(\frac{T_{n,h}-I}{h}-\mathcal{A}_n\right)f(we^{-s+h}+\sigma(s-h)A_n)\right]\\
&+\lim_{h\searrow 0}\mathbbm{E}\left[\mathcal{A}_nf(we^{-s+h}+\sigma(s-h)A_n)-\mathcal{A}_nf(we^{-s}+\sigma(s)A_n)\right]\\
\stackrel{(\ref{trala})}=&0
\end{align*}
again, by dominated convergence. It can be applied because of (\ref{trala}) and the observation that:
\begin{align*}
&\left|\mathcal{A}_nf(we^{-s+h}+\sigma(s-h)A_n)-\mathcal{A}_nf(we^{-s}+\sigma(s)A_n)\right|\\
=&\left|-Df(we^{-s+h}+\sigma(s-h)A_n)[we^{-s+h}+\sigma(s-h)A_n]\right.\\
&+\sum_{i=1}^nD^2f(we^{-s+h}+\sigma(s-h)A_n)[\mathbbm{1}_{[i/n,1]}^{(2)}]\\
&\left.-Df(we^{-s}+\sigma(s)A_n)[we^{-s}+\sigma(s)A_n]+\sum_{i=1}^nD^2f(we^{-s}+\sigma(s)A_n)[\mathbbm{1}_{[i/n,1]}^{(2)}]\right|\\
\leq&\|f\|_M\left(1+\|we^{-s+h}+\sigma(s-h)A_n\|^2\right)\|we^{-s+h}+\sigma(s-h)A_n\|\\
&+n\|f\|_M\left(1+\|we^{-s+h}+\sigma(s-h)A_n\|\right)\\
&+\|f\|_M\left(1+\|we^{-s}+\sigma(s)A_n\|\right)\|we^{-s}+\sigma(s)A_n\|+\|f\|_Mn\left(1+\|we^{-s}+\sigma(s)A_n\|\right)\\
\leq&\|f\|_M\left(1+2\|w\|^2e^{-2s+2}+2\sigma^2(s-1)\|A_n\|^2\right)\left(\|w\|e^{-s+1}+\sigma(s-1)\|A_n\|\right)\\
&+n\|f\|_M\left(1+\|we^{-s+1}+\sigma(s-1)A_n\|\right)\\
&+\|f\|_M\left(1+\|we^{-s}+\sigma(s)A_n\|\right)\|we^{-s}+\sigma(s)A_n\|+n\|f\|_M\left(1+\|we^{-s}+\sigma(s)A_n\|\right)
\end{align*}
for all $h\in[0,1]$.

Thus, for all $w\in D[0,1]$ and $s>0$: $\frac{d}{ds}T_{n,s}f(w)=T_{n,s}\mathcal{A}_nf(w)$ and so, by the Fundamental Theorem of Calculus:
\begin{equation}\label{4.12}
T_{n,r}f(w)-f(w)=\int_0^rT_{n,s}\mathcal{A}_nf(w)ds.
\end{equation}
Applying (\ref{4.12}) to $f=\int_0^tT_{n,u}gdu$ we obtain:
$$T_{n,r}\int_0^tT_{n,u}g(w)du-\int_0^tT_{n,u}g(w)du=\int_0^rT_{n,s}\mathcal{A}_n\left(\int_0^tT_{n,u}g(w)du\right)ds$$
Now, we take $t\to\infty$. We apply dominated convergence, which is allowed because of (\ref{4.77}) and the following bound for $h_{t,u}(w)=\int_0^tT_{n,u}g(w)du$:
\begin{align*}
&|\mathcal{A}_nh_{t,u}(w)|\\
\leq&\int_0^t\mathbbm{E}\left|e^{-u}Dg(we^{-u}+\sigma(u)A_n)[w]\right|du\\
&+\sum_{i=1}^n\int_0^t\mathbbm{E}\left|e^{-2u}D^2g(we^{-u}+\sigma(u)A_n)\left[\mathbbm{1}_{[i/n,1]}^{(2)}\right]\right|du\\
\leq&\int_0^\infty\mathbbm{E}\left|e^{-u}Dg(we^{-u}+\sigma(u)A_n)[w]\right|du\\
&+\sum_{i=1}^n\int_0^\infty\mathbbm{E}\left|e^{-2u}D^2g(we^{-u}+\sigma(u)A_n)\left[\mathbbm{1}_{[i/n,1]}^{(2)}\right]\right|du\\
\stackrel{(\ref{4.78})\text{A,B}}\leq&\left(1+\frac{4}{3}\mathbbm{E}\|A_n\|^2\right)\|g\|_M(1+\|w\|^2)\|w\|+n\left(\frac{1}{2}+\frac{\mathbbm{E}\|A_n\|}{3}\right)\|g\|_M(1+\|w\|),
\end{align*}
where the first inequality follows again by dominated convergence applied because of (\ref{4.77}) in order to exchange integration and differentiation in a way similar to (\ref{4.800}). As a result, we obtain:
\begin{align*}
T_{n,r}\int_0^\infty T_{n,u}g(w)du-\int_0^\infty T_{n,u}g(w)du&=\int_0^r T_{n,s}\lim_{t\to\infty}\mathcal{A}_n\left(\int_0^tT_{n,u}g(w)du\right)ds\\
&\stackrel{(\ref{4.100})}=-\int_0^r T_{n,s}g(w)ds.
\end{align*}
Now, dividing both sides by $r$ and taking $r\to 0$, we obtain:
\begin{align*}
\mathcal{A}_n\left(\int_0^\infty T_{n,u}g(w)\right)&=-\lim_{r\to 0}\frac{1}{r}\int_0^r T_{n,s}g(w)ds\\
&=-\lim_{r\to 0}\left[\frac{1}{r}\int_0^r\mathbbm{E}g(we^{-s}+\sigma(s)A_n)ds\right]\\
&\stackrel{(\ref{4.200})}=-g(w),
\end{align*}
which finishes the proof.
\end{proof}
\begin{remark}\label{remark}
It is an easy consequence of Propositions \ref{approx_proposition} and \ref{prop2.10} that for $g\in M$: $$\mathcal{A}_n\phi_n(g)(w)=-D\phi_n(g)(w)[w]+\mathbbm{E}D^2\phi_n(g)(w)\left[A_n^{(2)}\right].$$
\end{remark}
\section{Proofs of Theorems \ref{time_change_theorem}, \ref{theorem10.1} and \ref{theorem_moran}}\label{section4}
\subsection{Proof of Theorem \ref{time_change_theorem}}
\subsubsection{Discretisation of Brownian Motion}
Let $A_n$ be as in (\ref{a_n}).
Now, note that we can first realise $B$ and then set $A_n(t)=B\left(\frac{\lfloor ns(t)\rfloor}{n}\right)$ for $t\in[0,1]$ so that:
\begin{align*}
\sup_{t\in[0,1]}|A_n(t)-Z(t)|&=\sup_{t\in[0,1]}\left|B\left(\frac{\lfloor ns(t)\rfloor}{n}\right)-B(s(t))\right|=\sup_{t\in[0,s(1)]}\left|B(t)-B\left(\frac{\lfloor nt\rfloor}{n}\right)\right|.
\end{align*}
By Lemma 3 of \cite{ito_processes} we get:
\begin{align}
\text{A)}\quad\mathbbm{E}\|A_n-Z\|&\leq \mathbbm{E}\left[\sup_{t,s\in[0,s(1)],|t-s|\leq \frac{1}{n}}|B(t)-B(s)|\right]\nonumber\\
&\leq \frac{5}{\sqrt{\pi}}\cdot\frac{6}{\sqrt{\log 2}}n^{-1/2}\sqrt{\log (2s(1)n)}\nonumber\\
\text{B)}\quad\mathbbm{E}\|A_n-Z\|^2&\leq\mathbbm{E}\left[\left(\sup_{t,s\in[0,s(1)],|t-s|\leq \frac{1}{n}}|B(t)-B(s)|\right)^2\right]\nonumber\\
&\leq\frac{5}{2}\cdot\left(\frac{6}{\sqrt{\log 2}}\right)^2n^{-1}\log (2s(1)n)\nonumber\\
\text{C)}\quad\mathbbm{E}\|A_n-Z\|^3&\leq\mathbbm{E}\left[\left(\sup_{t,s\in[0,s(1)],|t-s|\leq \frac{1}{n}}|B(t)-B(s)|\right)^3\right]\nonumber\\
&\leq\frac{5}{\sqrt{\pi}}\cdot\left(\frac{6}{\sqrt{\log 2}}\right)^3n^{-3/2}(\log (2s(1)n))^{3/2}\label{10.1}
\end{align}
and therefore we obtain, for any $g\in M$:
\begin{align}
&|\mathbbm{E}g(A_n)-\mathbbm{E}g(Z)|\nonumber\\
\stackrel{\text{MVT}}\leq &\mathbbm{E}\left[\sup_{c\in[0,1]}\|Dg\left((1-c)Z+cA_n\right)\|\|Z-A_n\|\right]\nonumber\\
\leq&\|g\|_M\mathbbm{E}\left[\sup_{c\in[0,1]}\left(1+\|Z+c(A_n-Z)\|^2\right)\|Z-A_n\|\right] \nonumber\\
\leq& \|g\|_M\mathbbm{E}\left[\left(1+2\|Z\|^2+2\|Z-A_n\|^2\right)\|Z-A_n\|\right]\nonumber\\
\stackrel{\text{H\"older}}\leq&\|g\|_M\left\lbrace\mathbbm{E}\|Z-A_n\|+2\mathbbm{E}\|Z-A_n\|^3+2\left(\mathbbm{E}\|Z\|^3\right)^{2/3}\left(\mathbbm{E}\|A_n-Z\|^3\right)^{1/3}\right\rbrace \nonumber\\
\stackrel{(\ref{10.1})}\leq& \|g\|_M\left\lbrace\frac{5}{\sqrt{\pi}}\cdot\frac{6}{\sqrt{\log 2}}n^{-1/2}\sqrt{\log (2s(1)n)}\right.\nonumber\\
&+\frac{10}{\sqrt{\pi}}\cdot\left(\frac{6}{\sqrt{\log 2}}\right)^3n^{-3/2}(\log (2s(1)n))^{3/2}\nonumber\\
&\left.+2\left(\mathbbm{E}\|Z\|^3\right)^{2/3}\left(\frac{5}{\sqrt{\pi}}\right)^{1/3}\cdot\frac{6}{\sqrt{\log 2}}n^{-1/2}\sqrt{\log (2s(1)n)}\right\rbrace\nonumber\\
\stackrel{\text{Doob's }L^3}\leq& \|g\|_M\left(\frac{30}{\sqrt{\pi\log 2}}+\frac{2\cdot 5^{1/3}\cdot 6}{\pi^{1/6}\sqrt{\log 2}}\left(\left(\frac{3}{2}\right)^3\cdot 2\sqrt{\frac{2}{\pi}}\right)^{2/3}s(1)\right)n^{-1/2}\sqrt{\log (2s(1)n)}\nonumber\\
&+\|g\|_M\frac{10}{\sqrt{\pi}}\cdot\left(\frac{6}{\sqrt{\log 2}}\right)^3n^{-3/2}(\log (2s(1)n))^{3/2}\nonumber\\
=&\|g\|_M\frac{30+54\cdot 5^{1/3}s(1)}{\sqrt{\pi\log 2}}n^{-1/2}\sqrt{\log (2s(1)n)}\nonumber\\
&+\|g\|_M\frac{2160}{\sqrt{\pi}(\log 2)^{3/2}}n^{-3/2}(\log (2s(1)n))^{3/2}.
\label{10.2}
\end{align}

\subsubsection{Applying Stein's method}
Let $g\in M$ and $g_n=g-\mathbbm{E}[g(A_n)]$. Let $f_n=\phi_n(g_n)$, as in (\ref{phi}). First, note that:
\begin{equation*}
\begin{aligned}
\mathbbm{E}Df_n(Y_n)[Y_n]&=n^{-1/2}\sum_{j=1}^{\lfloor ns(1)\rfloor}\mathbbm{E}Df_n(Y_n)[X_j\mathbbm{1}_{[s^{-1}(j/n),1]}].
\end{aligned}
\end{equation*}
We now let $Y_n^j=n^{-1/2}\sum_{k\neq j} X_k\mathbbm{1}_{[s^{-1}(k/n),1]}=Y_n-n^{-1/2}X_j\mathbbm{1}_{[s^{-1}(j/n),1]}$ and observe that, by Taylor's theorem:
\begin{align}
&\left| n^{-1/2}\mathbbm{E}X_jDf_n(Y_n)\left[\mathbbm{1}_{[s^{-1}(j/n),1]}\right]\right.-\mathbbm{E}\left\lbrace n^{-1/2}X_jDf_n(Y_n^j)\left[\mathbbm{1}_{[s^{-1}(j/n),1]}\right]\right.\nonumber\\
&\left.\left.+n^{-1}\left(X_j\right)^2D^2f_n(Y_n^j)\left[\left(\mathbbm{1}_{[s^{-1}(j/n),1]}e_i\right)^{(2)}\right]\right\rbrace\right|\nonumber\\
=&\left|\mathbbm{E}\left[ n^{-1/2}X_jDf_n\left(Y_n^j+n^{-1/2}X_j\mathbbm{1}_{[j/n,1]}\right)\left[\mathbbm{1}_{[s^{-1}(j/n),1]}\right]\right.\right.\nonumber\\
&- n^{-1/2}X_jDf_n(Y_n^j)\left[\mathbbm{1}_{[s^{-1}(j/n),1]}\right]\nonumber\\
&\left.\left. -n^{-1}(X_j)^2D^2f_n(Y_n^j)\left[\left(\mathbbm{1}_{[s^{-1}(j/n),1]}\right)^{(2)}\right]\right]\right|\nonumber
\\\stackrel{(\ref{norm_bound})\text{C)}}\leq& \frac{n^{-3/2}}{6}\|g_n\|_{M}\mathbbm{E}|X_j|^3\label{this}
\end{align}
because, clearly, $\|\mathbbm{1}_{[s^{-1}(j/n),1]}\|=1$. Also, in the last inequality we have used the fact that $X_j$ is independent of $Y_n^j$. We can now sum (\ref{this}) over $j=1,2,...,\lfloor ns(1)\rfloor$ and use the fact that $X_j$'s are independent of $Y_n^j$'s and that $X_j$'s have mean $0$ and variance $1$ to obtain:
$$\left|\mathbbm{E}Df_n(Y_n)[Y_n]-n^{-1}\sum_{j=1}^{\lfloor ns(1)\rfloor}D^2f_n(Y_n^j)\left[\left(\mathbbm{1}_{[s^{-1}(j/n),1]}\right)^{(2)}\right]\right|\leq\frac{n^{-1/2}}{6}s(1)\|g_n\|_{M}\mathbbm{E}\left|X_1\right|^3\text{.}$$
We notice that for $\mathcal{A}_n$ defined in Proposition \ref{approx_proposition}, using Remark \ref{remark}, we obtain:

\begin{align*}
&\left|\mathbbm{E}\mathcal{A}_nf_n(Y_n)\right|=\left|\mathbbm{E}Df_n(Y_n)[Y_n]-\mathbbm{E}D^2f_n(Y_n)\left[A_n^{(2)}\right]\right|\\
\leq &\left|\mathbbm{E}Df_n(Y_n)[Y_n]-n^{-1}\sum_{j=1}^{\lfloor ns(1)\rfloor}\mathbbm{E}D^2f_n(Y^j_n)\left[\left(\mathbbm{1}_{[s^{-1}(j/n),1]}\right)^{(2)}\right]\right|\\
&+n^{-1}\left|\sum_{j=1}^{\lfloor ns(1)\rfloor}\mathbbm{E}\left\lbrace D^2f_n(Y_n)\left[\left(\mathbbm{1}_{[s^{-1}(j/n),1]}\right)^{(2)}\right]-D^2f_n(Y_n^j)\left[\left(\mathbbm{1}_{[s^{-1}(j/n),1]}\right)^{(2)}\right]\right\rbrace\right|\\
\leq& \frac{n^{-1/2}}{6}\|g_n\|_{M}\mathbbm{E}|X_1|^3\\
&+n^{-1}\left|\sum_{j=1}^{\lfloor ns(1)\rfloor}\mathbbm{E}\left\lbrace D^2f_n\left(Y_n^j+n^{-1/2}X_j\mathbbm{1}_{[s^{-1}(j/n),1]}\right)\left[\left(\mathbbm{1}_{[s^{-1}(j/n),1]}\right)^{(2)}\right]\right.\right.\\
&-\left.\left.D^2f_n(Y_n^j)\left[\left(\mathbbm{1}_{[s^{-1}(j/n),1]}\right)^{(2)}\right]\right\rbrace\right|\\
\stackrel{(\ref{norm_bound})C)}\leq&\frac{n^{-1/2}}{6}\|g_n\|_{M}\mathbbm{E}|X_1|^3+n^{-1}\frac{\|g_n\|_{M}}{3}\sum_{j=1}^{\lfloor ns(1)\rfloor}n^{-1/2}\mathbbm{E}\left\|X_j\mathbbm{1}_{[s^{-1}(j/n),1]}\right\|\\
\leq&\frac{n^{-1/2}}{6}s(1)\|g_n\|_{M}\mathbbm{E}|X_1|^3+\frac{n^{-1/2}}{3}s(1)\|g_n\|_{M}\mathbbm{E}\left\|X_1\mathbbm{1}_{[s^{-1}(j/n),1]}\right\|\\
\leq&\frac{n^{-1/2}s(1)\|g_n\|_{M}}{2}\mathbbm{E}|X_1|^3\text{.}
\end{align*}
The last inequality follows by Jensen's inequality:
\begin{equation}\label{jensens_ineq}
\mathbbm{E}|X_1|\leq\sqrt{\mathbbm{E}|X_1|^2}=1=\left(\mathbbm{E}|X_1|^2\right)^{3/2}\leq \mathbbm{E}|X_1|^3\text{.}
\end{equation}

Now, note that this gives:
\begin{align}
&|\mathbbm{E}g(Y_n)-\mathbbm{E}g(A_n)|=|\mathbbm{E}g_n(Y_n)|=|\mathbbm{E}\mathcal{A}_nf_n(Y_n)|\nonumber\\
\leq &  \frac{n^{-1/2}}{2}s(1)\|g_n\|_{M}\mathbbm{E}|X_1|^3\nonumber\\
\leq &\frac{n^{-1/2}}{2}s(1)\left(\|g\|_{M}+\mathbbm{E}g(A_n)\right)\mathbbm{E}|X_1|^3\nonumber\\
\leq &\frac{n^{-1/2}}{2}s(1)\left(2+\mathbbm{E}\|A_n\|^3\right)\|g\|_{M}\mathbbm{E}|X_1|^3\text{.}\label{proof0.5}
\end{align}
Also, recall that, by Doob's $L^p$ inequality (see, e.g. Theorem 1.7 of Chapter II in \cite{revuz}), if $M$ is a right-continuous martingale then for every $t>0$ and $p>1$:
$$\mathbbm{E}\left[\left(\sup_{s\in[0,t]}|M_s|\right)^p\right]\leq \left(\frac{p}{p-1}\right)^p\mathbbm{E}|M_t|^p.$$
Note that $A_n$ is an integrable process, adapted to its natural filtration and for any $t\in [s^{-1}(m/n),s^{-1}((m+1)/n))$ and $r\in [s^{-1}(l/n),s^{-1}((l+1)/n))$ and $r<t$:
\begin{align*}
\mathbbm{E}\left[A_n(t)\left|\left\lbrace A_n(u):u\leq r\right\rbrace\right.\right]&=n^{-1/2}\mathbbm{E}\left[\left.\sum_{k=1}^mZ_k\right|Z_1,...,Z_l\right]\\
&=n^{-1/2}\sum_{k=1}^lZ_k + n^{-1/2}\mathbbm{E}\left[\sum_{k=l+1}^m Z_k\right]\\
& = A_n(r)
\end{align*}
and so $A_n$ is a right-continuous martingale. Applying Doob's $L^3$ inequality to it yields:
\begin{equation*}
\begin{aligned}
&\mathbbm{E}\|A_n\|^3\leq \left(\frac{3}{2}\right)^3\mathbbm{E}|A_n(1)|^3=\left(\frac{3}{2}\right)^3\cdot 2\sqrt{\frac{2}{\pi}}s(1)^{3/2}\text{,}
\end{aligned}
\end{equation*}
because $A_n(1)\sim\mathcal{N}(0,\frac{\lfloor ns(1)\rfloor}{n})$.
Therefore (\ref{proof0.5}) gives:
\begin{equation}\label{proof0.7}
|\mathbbm{E}g(Y_n)-\mathbbm{E}g(A_n)|\leq n^{-1/2}\left(1+\left(\frac{3}{2}\right)^3\cdot\sqrt{\frac{2}{\pi}}s(1)^{3/2}\right)\|g\|_{M}s(1)\mathbbm{E}|X_1|^3\text{.}
\end{equation}

Combining this with (\ref{10.2}):
\begin{align}
|\mathbbm{E}g(Y_n)-\mathbbm{E}g(Z)|\leq& \|g\|_M\frac{30+54\cdot 5^{1/3}s(1)}{\sqrt{\pi\log 2}}n^{-1/2}\sqrt{\log (2s(1)n)}\nonumber\\
&+\|g\|_Ms(1)\left(1+\left(\frac{3}{2}\right)^3\sqrt{\frac{2}{\pi}}s(1)^{3/2}\right)\mathbbm{E}|X_1|^3n^{-1/2}\nonumber\\
&+\|g\|_M\frac{2160}{\sqrt{\pi}(\log 2)^{3/2}}n^{-3/2}(\log (2s(1)n))^{3/2},
\label{10.4}
\end{align}
which proves Theorem \ref{time_change_theorem}.\qed
\subsection{Proof of Theorem \ref{theorem10.1}}\label{section4}
Note that $\left(P(\lfloor nS^{(n)}(t)\rfloor),t\in[0,1]\right)$ can be expressed in the following way:
$$P(\lfloor nS^{(n)}(t)\rfloor)-\lfloor nS^{(n)}(t)\rfloor =\sum_{i=1}^{\lfloor nS^{(n)}(t)\rfloor} X_i,$$
where $(X_i+1)$'s are i.i.d. Poisson($1$). 
Therefore, we can express $(\tilde{Y}_n(t),t\in[0,1])$ in the following way:
\begin{align*}
\tilde{Y}_n(t)=&n^{-1/2}\left\lbrace\sum_{i=1}^{\lfloor nS^{(n)}(t)\rfloor} X_i+P\left(nS^{(n)}(t)\right)-P\left(\lfloor nS^{(n)}(t)\rfloor\right)-\left(nS^{(n)}(t)-\lfloor nS^{(n)}(t)\rfloor\right)\right\rbrace.
\end{align*}
We also define:
$$Y_n(t)=n^{-1/2}\sum_{i=1}^{\lfloor nS^{(n)}(t)\rfloor} X_i.$$
Note that $\left|nS^{(n)}(t)-\lfloor nS^{(n)}(t)\rfloor\right|\leq 1$ for all $t\geq 0$. Also, observe that for all $t\geq 0$: 
$$\left|P\left(nS^{(n)}(t)\right)-P\left(\lfloor nS^{(n)}(t)\rfloor\right)\right|\leq P\left(\lfloor nS^{(n)}(t)\rfloor+1\right)-P\left(\lfloor nS^{(n)}(t)\rfloor\right).$$
By the independence of increments of a Poisson process:
\begin{align}
&\text{A)} \quad\mathbbm{E}\|\tilde{Y}_n-Y_n\|\leq n^{-1/2}\left[1+\mathbbm{E}\left[\max_{1\leq i\leq n}\bar{P}_i\right]\right]\nonumber\\
&\text{B)} \quad\mathbbm{E}\|\tilde{Y}_n-Y_n\|^3\leq n^{-3/2}\left[4+4\mathbbm{E}\left[\max_{1\leq i\leq n}\bar{P}_i^3\right]\right],\label{5.87}
\end{align}
where $\bar{P}_1,\cdots,\bar{P}_n\stackrel{\text{i.i.d}}\sim \textbf{Poisson}(1)$. Using the trick from \cite{simple_trick}, we note that, by Jensen's inequality applied to function $\exp(x\log\log (n+2))$:
\begin{align}
\exp\left(\log\log (n+2)\cdot\mathbbm{E}\left[\max_{1\leq i\leq n}\bar{P}_i\right]\right)&\leq \mathbbm{E}\left[\exp\left(\log\log (n+2)\cdot \max_{1\leq i\leq n}\bar{P}_i\right)\right]\nonumber\\
&=\mathbbm{E}\left[\max_{1\leq i\leq n}\exp\left(\log\log (n+2)\cdot\bar{P}_i\right)\right]\nonumber\\
&\leq n \mathbbm{E}\left[\exp\left(\log\log n\cdot\bar{P}_1\right)\right]\nonumber\\
&=n\exp\left(\log (n+2)-1\right)\nonumber\\
&\leq \left(1+2e^{-1}\right)n^2\label{5.88}
\end{align}
and by Jensen's inequality applied to function $\exp(x^{1/3}\log\log (n+3))$, which is convex for $x\geq \frac{8}{(\log\log (n+3))^3}$:

\begin{align}
&\exp\left(\log\log (n+3)\cdot\left\lbrace\mathbbm{E}\left[\left.\max_{1\leq i\leq n}\bar{P}_i^3\right|\left(\max_{1\leq i\leq n}\bar{P}_i^3\right)\geq \frac{8}{\left(\log\log (n+3)\right)^3}\right]\right\rbrace^{1/3}\right)\nonumber\\
\leq&\mathbbm{E}\left[\left.\exp\left(\log\log (n+3)\cdot\max_{1\leq i\leq n}\bar{P}_i\right)\right|\left(\max_{1\leq i\leq n}\bar{P}_i^3\right)\geq \frac{8}{\left(\log\log (n+3)\right)^3}\right]\nonumber\\
\leq&n\mathbbm{E}\left[\exp\left(\log\log (n+3)\cdot\bar{P}_1\right)\left|\left(\max_{1\leq i\leq n}\bar{P}_i^3\right)\geq \frac{8}{\left(\log\log (n+3)\right)^3}\right.\right]\nonumber\\
\leq& \frac{n\exp(\log (n+3)-1)}{\mathbbm{P}\left[\left(\max_{1\leq i\leq n}\bar{P}_i\right)\geq \frac{2}{\left(\log\log (n+3)\right)}\right]}\nonumber\\
\leq&\frac{n\exp(\log (n+3)-1)}{\mathbbm{P}\left[\bar{P}_1\geq \frac{2}{\left(\log\log 4)\right)}\right]}\nonumber\\
\leq&\frac{(1+3e^{-1})n^2}{1-\frac{1957}{720e}}.
\label{5.89}
\end{align}
Now, combining (\ref{5.87}), (\ref{5.88}) and (\ref{5.89}), we obtain:
\begin{align}
\text{A)}\quad &\mathbbm{E}\|\tilde{Y}_n-Y_n\|\leq n^{-1/2}\left[1+\frac{\log(1+2e^{-1})+2\log n}{\log\log(n+2)}\right]\nonumber\\
\text{B)}\quad&\mathbbm{E}\|\tilde{Y}_n-Y_n\|^3\leq n^{-3/2}\left[4+4\left(\frac{\log\left(\frac{1+3e^{-1}}{1-\frac{1957}{720e}}\right)+2\log n}{\log\log(n+3)}\right)^3+\frac{32}{(\log\log(n+3))^3}\right]\nonumber\\
&\leq n^{-3/2}\left[4+\frac{16701+128(\log n)^3}{(\log\log(n+3))^3}\right].
\label{5.90}
\end{align}
We also note that:
\begin{align}
\left[\mathbbm{E}\left\|\tilde{Y}_n\right\|^3\right]^{2/3}&\stackrel{\text{Doob}}\leq n^{-1}\left(\frac{3}{2}\right)^2\left[\mathbbm{E}\left|P\left(nS^{(n)}(1)\right)-nS^{(n)}(1)\right|^3\right]^{2/3}\nonumber\\
&\leq\left(\frac{3}{2}\right)^2n^{-1}\left[\mathbbm{E}\left|P\left(nS^{(n)}(1)\right)-nS^{(n)}(1)\right|^4\right]^{1/2}\nonumber\\
&=\left(\frac{3}{2}\right)^2n^{-1/2}\sqrt{S^{(n)}(1)}\left(1+3nS^{(n)}(1)\right)^{1/2}.
\label{10.5}
\end{align}

Then, for every $g\in M$:
\begin{align}
&|\mathbbm{E}g(Y_n)-\mathbbm{E}g(\tilde{Y}_n)|\stackrel{\text{MVT}}\leq \mathbbm{E}\left[\sup_{c\in[0,1]}\|Dg((1-c)\tilde{Y}_n+cY_n)\|\|Y_n-\tilde{Y}_n\|\right]\nonumber \\
\leq&\|g\|_M\mathbbm{E}\left[\sup_{c\in[0,1]}(1+\|\tilde{Y}_n+c(Y_n-\tilde{Y}_n)\|^2)\|Y_n-\tilde{Y}_n\|\right]\nonumber\\
\leq& \|g\|_M\mathbbm{E}\left[(1+2\|\tilde{Y}_n\|^2+2\|Y_n-\tilde{Y}_n\|^2)\|Y_n-\tilde{Y}_n\|\right]\nonumber\\
\stackrel{\text{H\"older}}\leq& \|g\|_M\left\lbrace\mathbbm{E}\|Y_n-\tilde{Y}_n\|+2\mathbbm{E}\|Y_n-\tilde{Y}_n\|^3+2(\mathbbm{E}\|\tilde{Y}_n\|^3)^{2/3}(\mathbbm{E}\|Y_n-\tilde{Y}_n\|^3)^{1/3}\right\rbrace\nonumber\\
\stackrel{(\ref{5.90}),(\ref{10.5})}\leq& \|g\|_M\left\lbrace n^{-1/2}\left[1+\frac{\log(1+2e^{-1})+2\log n}{\log\log(n+2)}\right]+n^{-3/2}\left[8+\frac{33402+256(\log n)^3}{(\log\log(n+3))^3}\right]\right.\nonumber\\
&\left.+\frac{9}{2}n^{-1}\sqrt{S^{(n)}(1)}\left(1+3nS^{(n)}(1)\right)^{1/2}\left[4+\frac{16701+128(\log n)^3}{(\log\log(n+3))^3}\right]^{1/3}\right\rbrace.
\label{10.6}
\end{align}
Let $A_n(t)=n^{-1/2}\sum_{i=1}^{\lfloor nS^{(n)}(t)\rfloor} Z_i,t\in[0,1]$ for $Z_i\stackrel{i.i.d}\sim\mathcal{N}(0,1)$. By (\ref{proof0.7}):
\begin{align}
|\mathbbm{E}g(Y_n)-\mathbbm{E}g(A_n)|&\leq n^{-1/2}\left(1+\left(\frac{3}{2}\right)^3\sqrt{\frac{2}{\pi}}S^{(n)}(1)^{3/2}\right)S^{(n)}(1)\|g\|_M\mathbbm{E}|X_1|^3\nonumber\\
&\leq n^{-1/2}\left(1+\left(\frac{3}{2}\right)^3\sqrt{\frac{2}{\pi}}S^{(n)}(1)^{3/2}\right)S^{(n)}(1)\|g\|_M(1+2e^{-1})\label{10.7}
\end{align}
because $X_1\stackrel{\mathcal{D}}=P(1)-1$.
Now let $\tilde{A}_n(t)=n^{-1/2}\sum_{i=1}^{\lfloor nS(t)\rfloor} Z_i,t\in[0,1]$. Then:
\begin{align}
&\text{A)} \quad\mathbbm{E}\|A_n-\tilde{A}_n\|=n^{-1/2}\mathbbm{E}\left[\sup_{t\in[0,1]}\left|\sum_{i=\lfloor nS(t)\wedge S^{(n)}(t)\rfloor+1}^{\lfloor nS(t)\vee S^{(n)}(t)\rfloor} Z_i\right|\right]\nonumber\\
&=n^{-1/2}\mathbbm{E}\left[\sup_{t\in[0,1]}\left|\sum_{i=1}^{\lfloor nS(t)\vee S^{(n)}(t)\rfloor-(\lfloor nS(t)\wedge S^{(n)}(t)\rfloor+1)} Z_i\right|\right]\nonumber\\
&\stackrel{Doob,Jensen}\leq 2n^{-1/2}\sqrt{\mathbbm{E}\left|\sum_{i=1}^{\sup_{t\in[0,1]}(\lfloor nS(t)\vee S^{(n)}(t)\rfloor-(\lfloor nS(t)\wedge S^{(n)}(t)\rfloor+1))} Z_i\right|^2}\leq 2\sqrt{\|S-S^{(n)}\|}\nonumber\\
&\text{B)} \quad\mathbbm{E}\|A_n-\tilde{A}_n\|^3=n^{-3/2}\mathbbm{E}\left[\sup_{t\in[0,1]}\left|\sum_{i=\lfloor nS(t)\wedge S^{(n)}(t)\rfloor+1}^{\lfloor nS(t)\vee S^{(n)}(t)\rfloor} Z_i\right|^3\right]\nonumber\\
&=n^{-3/2}\mathbbm{E}\left[\sup_{t\in[0,1]}\left|\sum_{i=1}^{\lfloor nS(t)\vee S^{(n)}(t)\rfloor-(\lfloor nS(t)\wedge S^{(n)}(t)\rfloor+1)} Z_i\right|^3\right]\nonumber\\
&\stackrel{Doob}\leq \left(\frac{3}{2}\right)^3n^{-3/2}\mathbbm{E}\left[\left|\sum_{i=1}^{\sup_{t\in[0,1]}(\lfloor nS(t)\vee S^{(n)}(t)\rfloor-(\lfloor nS(t)\wedge S^{(n)}(t)\rfloor+1))} Z_i\right|^3\right]\nonumber\\
&\leq 2\sqrt{\frac{2}{\pi}}\left(\frac{3}{2}\right)^3\|S-S^{(n)}\|^{3/2}\nonumber\\
&\text{C)}\quad \left(\mathbbm{E}\|\tilde{A}_n\|^3\right)^{2/3}\stackrel{\text{Doob}}\leq 2n^{-1}\left(\frac{3}{2}\right)^2\pi^{-1/3}\left(S(1)^{3/2}\right)^{2/3}=\frac{9}{2\pi^{1/3}}n^{-1}S(1).
\label{11.91}
\end{align}
Therefore:
\begin{align}
&|\mathbbm{E}g(A_n)-\mathbbm{E}g(\tilde{A_n})|\stackrel{\text{MVT}}\leq \mathbbm{E}\left[\sup_{c\in[0,1]}\|Dg\left((1-c)\tilde{A}_n+cA_n\right)\|\|A_n-\tilde{A}_n\|\right]\nonumber\\
\leq&\|g\|_M\mathbbm{E}\left[\sup_{c\in[0,1]}(1+\|\tilde{A}_n+c(A_n-\tilde{A}_n)\|^2)\|A_n-\tilde{A}_n\|\right]\nonumber\\
\leq& \|g\|_M\mathbbm{E}\left[(1+2\|\tilde{A}_n\|^2+2\|A_n-\tilde{A}_n\|^2)\|A_n-\tilde{A_n}\|\right]\nonumber\\
\stackrel{\text{H\"older}}\leq &\|g\|_M\left\lbrace \mathbbm{E}\|A_n-\tilde{A}_n\|+2\mathbbm{E}\|A_n-\tilde{A}_n\|^3+2(\mathbbm{E}\|\tilde{A}_n\|^3)^{2/3}\left(\mathbbm{E}\|A_n-\tilde{A}_n\|\right)^{1/3}\right\rbrace\nonumber\\
\stackrel{(\ref{11.91})}\leq&\|g\|_M\left\lbrace 2\sqrt{\|S-S^{(n)}\|}+\frac{27\sqrt{2}}{2\sqrt{\pi}}\|S-S^{(n)}\|^{3/2}+\frac{27\sqrt{2}}{2\sqrt{\pi}}S(1)\sqrt{\|S-S^{(n)}\|}\right\rbrace.
\label{10.8}
\end{align}

By (\ref{10.2}) we get for $Z=B\circ S$:
\begin{equation}\label{10.9}
\begin{aligned}
&|\mathbbm{E}g(\tilde{A}_n)-\mathbbm{E}g(Z)|\leq
\|g\|_M\frac{30+54\cdot 5^{1/3}S(1)}{\sqrt{\pi\log 2}}n^{-1/2}\sqrt{\log (2S(1)n)}\\
&+\|g\|_M\frac{2160}{\sqrt{\pi}(\log 2)^{3/2}}n^{-3/2}(\log (2S(1)n))^{3/2}.
\end{aligned}
\end{equation}
Theorem \ref{theorem10.1} now follows from (\ref{10.6}), (\ref{10.7}), (\ref{10.8}), (\ref{10.9}).\qed
\subsection{Proof of Theorem \ref{theorem_moran}}\label{section5}
Note that $X_n$ jumps up by $\frac{1}{n}$ with intensity $\frac{1}{2}n^2X_n(t)(1-X_n(t))+n\nu_2(1-X_n(t))$ and down by $\frac{1}{n}$ with intensity $\frac{1}{2}n^2X_n(t)(1-X_n(t))+n\nu_1X_n(t)$. To see this observe that a jump occurs with intensity ${n\choose 2}$ and it is an up-jump if the first gene chosen was of type a, the second of type A and the one with type A died (which happens with probability $\frac{1}{2}X_n(t)\frac{n(1-X_n(t))}{n-1}$) or if the first one chosen was of type A, the second of type a and the type A gene died (which happens with probability $\frac{1}{2}(1-X_n(t))\frac{nX_n(t)}{n-1}$). In addition, there are $n(1-X_n(t))$ genes of type A and each of them mutates into type a at rate $\nu_2$. Hence:
\begin{align*}
\begin{cases}
\mathbbm{P}\left[X_n(t+\Delta t)-X_n(t)=\frac{1}{n}\right]=\frac{1}{2}n^2X_n(t)(1-X_n(t))\Delta t+n\nu_2(1-X_n(t))\Delta t\\
\mathbbm{P}\left[X_n(t+\Delta t)-X_n(t)=-\frac{1}{n}\right]=\frac{1}{2}n^2X_n(t)(1-X_n(t))\Delta t+n\nu_1X_n(t)\Delta t.
\end{cases}
\end{align*}
Therefore:
\begin{align*}
M_n(t)=&\frac{P_1\left(n^2R^{(n)}_1(t)\right)-n^2R^{(n)}_1(t)}{n}-\frac{P_{-1}\left(n^2R^{(n)}_{-1}(t)\right)-n^2R^{(n)}_{-1}(t)}{n}\\
&+\int_0^t\left(\nu_2-(\nu_1+\nu_2)M_n(s)\right)ds,
\end{align*}
where $P_1,P_{-1}$ are i.i.d. Poisson processes with rate 1, independent of $X_n$, and  
$$\begin{cases}
R_1^{(n)}(t):=\int_0^t \left(\frac{1}{2}X_n(s)+\frac{\nu_2}{n}\right)(1-X_n(s))ds\\
R_{-1}^{(n)}(t):=\int_0^t \left(\frac{1}{2}(1-X_n(s))+\frac{\nu_1}{n}\right)X_n(s)ds
\end{cases}\text{for } t\in[0,1].$$
Also let:
$$\begin{cases}
R_1(t)=R_{-1}(t):=\int_0^t\frac{1}{2}X(s)(1-X(s))ds\\
I_n(t):=\int_0^t\left(\nu_2-(\nu_1+\nu_2)X_n(s)\right)ds\\
I(t):=\int_0^t\left(\nu_2-(\nu_1+\nu_2)X(s)\right)ds
\end{cases} \text{for }t\in[0,1].$$

Let us denote $Z_1=B_1\circ R_1$, $Z_{-1}=B_{-1}\circ R_{-1}$, where $B_1$ and $B_{-1}$ are i.i.d. standard Brownian Motions, independent of $X$ and:
$$Y_n^1(\cdot):=\frac{P_1\left(n^2R_1^{(n)}(\cdot)\right)-n^2R_1^{(n)}(\cdot)}{n},\qquad Y_n^{-1}(\cdot):=\frac{P_{-1}\left(n^2R_{-1}^{(n)}(\cdot)\right)-n^2R_{-1}^{(n)}(\cdot)}{n}.$$ 

Now, for any $g\in M$:
\begin{align}
&|\mathbbm{E}g(M_n)-\mathbbm{E}g(M)|\nonumber\\
\leq& \left|\mathbbm{E}\left[\mathbbm{E}\left[g(Y_n^1-Y_n^{-1}+I_n)-g(Z_1-Y_n^{-1}+I_n)|Y_n^{-1},I_n,R_1^{(n)},R_{-1}^{(n)},R_1\right]\right]\right.\nonumber\\
&+\mathbbm{E}\left[\mathbbm{E}\left[g(Z_1-Y_n^{-1}+I_n)-g(Z_1-Z_{-1}+I_n)|Z_1,I_n,R_1^{(n)},R_{-1}^{(n)},R_{1}\right]\right]\nonumber\\
&\left.+\mathbbm{E}\left[\mathbbm{E}\left[g(Z_1-Z_{-1}+I_n)-g(Z_1-Z_{-1}+I)|Z_1,Z_{-1},R_{1}\right]\right]\right|\nonumber\\
=&\left|\mathbbm{E}\left[\mathbbm{E}\left[g^{(1)}(Y_n^1)-g^{(1)}(Z_1)|Y_n^{-1},I_n,R_1^{(n)},R_1\right]\right]\nonumber\right.\\
&+\mathbbm{E}\left[\mathbbm{E}\left[g^{(-1)}(Y_n^{-1})-g^{(-1)}(Z_{-1})|Z_1,I_n,R_{-1}^{(n)},R_{-1}\right]\right]\nonumber\\
&\left.+\mathbbm{E}\left[\mathbbm{E}\left[g^{(0)}(I_n)-g^{(0)}(I)|Z_1,Z_{-1},R_{1},R_{-1}\right]\right]\right|,
\label{11.29}
\end{align}
where $g^{(1)}(x)=g(x-Y_n^{-1}+I_n)$, $g^{(-1)}(x)=g(Z_1-x+I_n)$, $g^{(0)}(x)=g(Z_1-Z_{-1}+x)$. Note that, given $R_1$ and $R_1^{(n)}$, $Y_n^1$ and $Z_1$ are independent of $Y_n^{-1}$ and $I_n$. Similarly, given $R_{-1}$ and $R_{-1}^{(n)}$, $Y_n^{-1}$ and $Z_{-1}$ are independent of $Z_1$ and $I_n$. Also, given $R_1$ and $R_{-1}$, $I_n$ and $I$ are independent of $Z_1$ and $Z_{-1}$.

We now apply Theorem \ref{theorem10.1} to obtain:
\begin{align}
&\text{A)}\quad \mathbbm{E}\left[\mathbbm{E}\left[g^{(1)}(Y_n^1)-g^{(1)}(Z_1)|Y_n^{-1},I_n,R_1^{(n)},R_1\right]\right]\nonumber\\
\leq& \mathbbm{E}\left[\mathbbm{E}\left[\|g^{(1)}\|_M|R_1,R_1^{(n)}\right]\right.\nonumber\\
&\cdot\left\lbrace \left(2+\frac{27\sqrt{2}}{2\sqrt{\pi}}R_1(1)\right)\sqrt{\|R_1-R_1^{(n)}\|}+\frac{27\sqrt{2}}{2\sqrt{\pi}}\|R_1-R_1^{(n)}\|^{3/2}\right.\nonumber\\
&+ n^{-1}\left[\frac{30+54\cdot 5^{1/3}R_1(1)}{\sqrt{\pi\log 2}}\sqrt{\log (2R_1(1)n^2)}\right.+\left(1+\left(\frac{3}{2}\right)^3\sqrt{\frac{2}{\pi}}R_1^{(n)}(1)^{3/2}\right)\nonumber\\
&\cdot R_1^{(n)}(1)(1+2e^{-1})
\left.+1+\frac{\log(1+2e^{-1})+4\log n}{\log\log(n^2+2)}\right]\nonumber\\
&+n^{-2}\frac{9\sqrt{R_1^{(n)}(1)}}{2}\left(1+3n^2R_1^{(n)}(1)\right)^{1/2}\left[4+\frac{16701+1024(\log n)^3}{(\log\log(n^2+3))^3}\right]^{1/3}\nonumber\\
&+\left.n^{-3}\left[\frac{2160}{\sqrt{\pi}(\log 2)^{3/2}}(\log (2R_1(1)n^2))^{3/2}+8+\frac{33402+2048(\log n)^3}{(\log\log(n^2+3))^3}\right]\right\rbrace\nonumber\\
\leq&\mathbbm{E}\left[\mathbbm{E}\left[\|g^{(1)}\|_M|R_1,R_1^{(n)}\right]\left\lbrace \left(2+\frac{27\sqrt{2}}{16\sqrt{\pi}}\right)\sqrt{\|R_1-R_1^{(n)}\|}+\frac{27\sqrt{2}}{2\sqrt{\pi}}\|R_1-R_1^{(n)}\|^{3/2}\right.\right.\nonumber\\
&+ n^{-1}\left[\frac{30+\frac{27}{4}\cdot 5^{1/3}}{\sqrt{\pi\log 2}}\sqrt{\log (n^2/4)}\right.+\left(1+\left(\frac{3}{2}\right)^3\sqrt{\frac{2}{\pi}}\left(\frac{1}{8}+\frac{\nu_2}{n}\right)^{3/2}\right)\nonumber\\
&\cdot\left.\left(\frac{1}{8}+\frac{\nu_2}{n}\right)(1+2e^{-1})+1+\frac{\log(1+2e^{-1})+4\log n}{\log\log(n^2+2)}\right]\nonumber\\
&+n^{-2}\frac{9\sqrt{\frac{1}{8}+\frac{\nu_2}{n}}}{2}\left(1+\frac{3n^2}{8}+3n\nu_2\right)^{1/2}\left[4+\frac{16701+1024(\log n)^3}{(\log\log(n^2+3))^3}\right]^{1/3}\nonumber\\
&+\left.\left.n^{-3}\left[\frac{2160}{\sqrt{\pi}(\log 2)^{3/2}}(\log (n^2/4+\nu_2n))^{3/2}+8+\frac{33402+2048(\log n)^3}{(\log\log(n^2+3))^3}\right]\right\rbrace\right]\nonumber\\
&\text{B)}\quad\left|\mathbbm{E}\left(\mathbbm{E}\left\lbrace\left.\mathbbm{E}\left[\left.g^{(-1)}\left(Y_n^{-1}\right)-g^{(-1)}\left(Z_{-1}\right)\right|Z_1,I_n\right]\right|R_{-1},R_{-1}^{(n)}\right\rbrace\right)\right|\nonumber\\
\leq&\mathbbm{E}\left[\mathbbm{E}\left[\left.\|g^{(-1)}\|_M\right.|R_{-1},R_{-1}^{(n)}\right]\right.\nonumber\\
&\cdot\left\lbrace \left(2+\frac{27\sqrt{2}}{2\sqrt{\pi}}R_{-1}(1)\right)\sqrt{\|R_{-1}-R_{-1}^{(n)}\|}+\frac{27\sqrt{2}}{2\sqrt{\pi}}\|R_{-1}-R_{-1}^{(n)}\|^{3/2}\right.\nonumber\\
&+ n^{-1}\left[\frac{30+54\cdot 5^{1/3}R_{-1}(1)}{\sqrt{\pi\log 2}}\sqrt{\log (2R_{-1}(1)n^2)}+\left(1+\left(\frac{3}{2}\right)^3\sqrt{\frac{2}{\pi}}R_{-1}^{(n)}(1)^{3/2}\right)\nonumber\right.\\
&\cdot\left. R_{-1}^{(n)}(1)(1+2e^{-1})+1+\frac{\log(1+2e^{-1})+4\log n}{\log\log(n^2+2)}\right]\nonumber\\
&+n^{-2}\frac{9\sqrt{R_{-1}^{(n)}(1)}}{2}\left(1+3n^2R_{-1}^{(n)}(1)\right)^{1/2}\left[4+\frac{16701+1024(\log n)^3}{(\log\log(n^2+3))^3}\right]^{1/3}\nonumber\\
&+\left.n^{-3}\left[\frac{2160}{\sqrt{\pi}(\log 2)^{3/2}}(\log (2R_{-1}(1)n^2))^{3/2}+8+\frac{33402+2048(\log n)^3}{(\log\log(n^2+3))^3}\right]\right\rbrace\nonumber\\
\leq&\mathbbm{E}\left[\mathbbm{E}\left[\|g^{(1)}\|_M|R_{-1},R_{-1}^{(n)}\right]\left\lbrace \left(2+\frac{27\sqrt{2}}{16\sqrt{\pi}}\right)\sqrt{\|R_{-1}-R_{-1}^{(n)}\|}+\frac{27\sqrt{2}}{2\sqrt{\pi}}\|R_{-1}-R_{-1}^{(n)}\|^{3/2}\right.\right.\nonumber\\
&+ n^{-1}\left[\frac{30+\frac{27}{4}\cdot 5^{1/3}}{\sqrt{\pi\log 2}}\sqrt{\log (n^2/4)}\right.+\left(1+\left(\frac{3}{2}\right)^3\sqrt{\frac{2}{\pi}}\left(\frac{1}{8}+\frac{\nu_1}{n}\right)^{3/2}\right)\nonumber\\
&\left.\cdot\left(\frac{1}{8}+\frac{\nu_1}{n}\right)(1+2e^{-1})+1+\frac{\log(1+2e^{-1})+4\log n}{\log\log(n^2+2)}\right]\nonumber\\
&+n^{-2}\frac{9\sqrt{\frac{1}{8}+\frac{\nu_1}{n}}}{2}\left(1+\frac{3n^2}{8}+3n\nu_1\right)^{1/2}\left[4+\frac{16701+1024(\log n)^3}{(\log\log(n^2+3))^3}\right]^{1/3}\nonumber\\
&+\left.\left.n^{-3}\left[\frac{2160}{\sqrt{\pi}(\log 2)^{3/2}}(\log (n^2/4+\nu_1n))^{3/2}+8+\frac{33402+2048(\log n)^3}{(\log\log(n^2+3))^3}\right]\right\rbrace\right],
\label{11.31}
\end{align}
where we have used the fact that $R_1(1),R_{-1}(1)\leq \frac{1}{8}$ and $R_1^{(n)}\leq \frac{1}{8}+\frac{\nu_2}{n}$, $R_{-1}^{(n)}\leq \frac{1}{8}+\frac{\nu_1}{n}$. We also note that:
\begin{align}
&\left|\mathbbm{E}\left[\mathbbm{E}\left[g^{(0)}(I_n)-g^{(0)}(I)|Z_1,Z_{-1},R_1,R_{-1}\right]\right]\right|\nonumber\\
\leq&\mathbbm{E}\left[\mathbbm{E}\left[\|g^{(0)}\|_M|R_1,R_{-1}\right]\left[\|I_n-I\|+2\|I_n-I\|^3+2\|I\|\|I_n-I\|\right]\right]\nonumber\\
\leq&\mathbbm{E}\left[\mathbbm{E}\left[\|g^{(0)}\|_M|R_1,R_{-1}\right]\left[\|I_n-I\|+2\|I_n-I\|^3+2\nu_2\|I_n-I\|\right]\right].\label{11.32}
\end{align}
Now note that:
\begin{align*}
&\text{A)}\quad\mathbbm{E}\left[\left.\sup_{w,h\in D}\frac{\left\|D^2g\left(w+h-Y_n^{-1}+I_n\right)-D^2g\left(w-Y_n^{-1}+I_n\right)\right\|}{\|h\|}\right|R_1,R_1^{(n)},R_{-1}^{(n)}\right]\\
\leq&\sup_{x,y\in D}\frac{\|D^2g(x+y)-D^2g(x)\|}{\|y\|}\\
&\text{B)}\quad \mathbbm{E}\left[\left.\sup_{w\in D}\frac{\left\|D^2g\left(w-Y_n^{-1}+I_n\right)\right\|}{1+\|w\|}\right|R_1,R_1^{(n)},R_{-1}^{(n)}\right]\\
\leq&\mathbbm{E}\left[\left.\sup_{w\in D}\frac{\left\|D^2g\left(w-Y_n^{-1}+I_n\right)\right\|}{1+\left\|w-Y_n^{-1}+I_n\right\|}\cdot \frac{1+\left\|w-Y_n^{-1}+I_n\right\|}{1+\|w\|}\right|R_1,R_1^{(n)},R_{-1}^{(n)}\right]\\
\stackrel{Doob}\leq& \left(\sup_{w\in D}\frac{\|D^2g(w)\|}{1+\|w\|}\right)\left[1+2\sqrt{\mathbbm{E}\left[\left.\left(Y_n^{-1}(1)\right)^2\right| R_1,R_1^{(n)},R_{-1}^{(n)}\right]}+\nu_2\right]\\
=&\left(\sup_{w\in D}\frac{\|D^2g(w)\|}{1+\|w\|}\right)\left[1+2\sqrt{R_{-1}^{(n)}(1)}+\nu_2\right]\\
\leq &\left(1+2\sqrt{\frac{1}{8}+\frac{\nu_1}{n}}+\nu_2\right)\left(\sup_{w\in D}\frac{\|D^2g(w)\|}{1+\|w\|}\right)\\
&\text{C)}\quad \mathbbm{E}\left[\left.\sup_{w\in D}\frac{\left\|Dg\left(w-Y_n^{-1}+I_n\right)\right\|}{1+\|w\|^2}\right|R_1,R_1^{(n)},R_{-1}^{(n)}\right]\\
\leq &\mathbbm{E}\left[\left.\sup_{w\in D}\frac{\left\|Dg\left(w-Y_n^{-1}+I_n\right)\right\|}{1+\left\|w-Y_n^{-1}+I_n\right\|^2}\cdot \frac{1+\left\|w-Y_n^{-1}+I_n\right\|^2}{1+\|w\|^2}\right|R_1,R_1^{(n)},R_{-1}^{(n)}\right]\\
\stackrel{Doob}\leq& 3\left(\sup_{w\in D}\frac{\|Dg(w)\|}{1+\|w\|^2}\right)\left[1+4\mathbbm{E}\left[\left.\left(Y_n^{-1}(1)\right)^2\right|R_1,R_1^{(n)},R_{-1}^{(n)}\right]+\nu_2^2\right]\\
=&3\left(\sup_{w\in D}\frac{\|Dg(w)\|}{1+\|w\|^2}\right)\left[1+4R_{-1}^{n}(1)+\nu_2^2\right]\\
\leq &\left(\frac{9}{2}+\frac{12\nu_1}{n}+3\nu_2^2\right)\left(\sup_{w\in D}\frac{\|Dg(w)\|}{1+\|w\|^2}\right)\\
&\text{D)}\quad \mathbbm{E}\left[\left.\sup_{w\in D}\frac{\left|g\left(w-Y_n^{-1}\right)\right|}{1+\|w\|^3}\right|R_1,R_1^{(n)}, R_{-1}^{(n)}\right]\\
\leq&\mathbbm{E}\left[\left.\sup_{w\in D}\frac{\left|g\left(w-Y_n^{-1}+I_n\right)\right|}{1+\left\|w-Y_n^{-1}+I_n\right\|^3}\cdot \frac{1+\left\|w-Y_n^{-1}+I_n\right\|^3}{1+\|w\|^3}\right|R_1,R_1^{(n)},R_{-1}^{(n)}\right]\\
\stackrel{Doob}\leq& 9\left(\sup_{w\in D}\frac{|g(w)|}{1+\|w\|^3}\right)\left[1+\frac{27}{8}\mathbbm{E}\left[\left.\left|Y_n^{-1}(1)\right|^3\right|R_1,R_1^{(n)},R_{-1}^{(n)}\right]+\nu_2^3\right]\\
\leq &9\left(\sup_{w\in D}\frac{|g(w)|}{1+\|w\|^3}\right)\left[1+\frac{27}{8}\left(3\left(R_{-1}^{(n)}(1)\right)^2+n^{-1}R_{-1}^{(n)}(1)\right)^{3/4}+\nu_2^3\right]\\
\leq &\left(9+\frac{243}{8}\cdot \left(3\left(\frac{1}{8}+\frac{\nu_1}{n}\right)^2+\frac{1}{8n}+\frac{\nu_1}{n^2}\right)^{3/4}+9\nu_2^3\right)\left(\sup_{w\in D}\frac{|g(w)|}{1+\|w\|^3}\right).
\end{align*}
Therefore:
\begin{align}
&\mathbbm{E}\left[\|g^{(1)}\|_M|R_1,R_1^{(n)},R_{-1}^{(n)}\right]\nonumber\\
\leq& \left[9+2\sqrt{\frac{1}{8}+\frac{\nu_1}{n}}+\frac{12\nu_1}{n}+\frac{243}{8}\cdot \left(\frac{3}{64}+\frac{6\nu_1+1}{8n}+\frac{\nu_1+\nu_1^2}{n^2}\right)^{3/4}+\nu_2+3\nu_2^2+9\nu_2^3\right]\|g\|_M\label{11.33}
\end{align}
Similarly:
\begin{align*}
&\text{A)}\quad\mathbbm{E}\left[\left.\sup_{w,h\in D}\frac{\left\|D^2g\left(Z_1-(w+h)+I_n\right)-D^2g\left(Z_1-w+I_n\right)\right\|}{\|h\|}\right|R_1,R_1^{(n)},R_{-1}^{(n)}\right]\\
\leq&\sup_{x,y\in D}\frac{\|D^2g(x+y)-D^2g(x)\|}{\|y\|}\\
&\text{B)}\quad \mathbbm{E}\left[\left.\sup_{w\in D}\frac{\left\|D^2g\left(Z_1-w+I_n\right)\right\|}{1+\|w\|}\right|R_1,R_1^{(n)},R_{-1}^{(n)}\right]\\
\leq&\mathbbm{E}\left[\left.\sup_{w\in D}\frac{\left\|D^2g\left(Z_1-w+I_n\right)\right\|}{1+\left\|Z_1-w+I_n\right\|}\cdot \frac{1+\left\|Z_1-w+I_n\right\|}{1+\|w\|}\right|R_1,R_1^{(n)},R_{-1}^{(n)}\right]\\
\stackrel{Doob}\leq &\left(\sup_{w\in D}\frac{\|D^2g(w)\|}{1+\|w\|}\right)\left[1+2\sqrt{\mathbbm{E}\left[\left.\left(Z_1(1)\right)^2\right| R_1,R_1^{(n)},R_{-1}^{(n)}\right]}+\nu_2\right]\\
=&\left(\sup_{w\in D}\frac{\|D^2g(w)\|}{1+\|w\|}\right)\left[1+2\sqrt{R_1(1)}+\nu_2\right]\\
\leq& \left(1+\frac{\sqrt{2}}{2}+\nu_2\right)\left(\sup_{w\in D}\frac{\|D^2g(w)\|}{1+\|w\|}\right)\\
&\text{C)}\quad \mathbbm{E}\left[\left.\sup_{w\in D}\frac{\left\|Dg\left(Z_1-w+I_n\right)\right\|}{1+\|w\|^2}\right|R_1,R_1^{(n)},R_{-1}^{(n)}\right]\\
\leq& \mathbbm{E}\left[\left.\sup_{w\in D}\frac{\left\|Dg\left(Z_1-w+I_n\right)\right\|}{1+\left\|Z_1-w+I_n\right\|^2}\cdot \frac{1+\left\|Z_1-w+I_n\right\|^2}{1+\|w\|^2}\right|R_1,R_1^{(n)},R_{-1}^{(n)}\right]\\
\stackrel{Doob}\leq &3\left(\sup_{w\in D}\frac{\|Dg(w)\|}{1+\|w\|^2}\right)\left[1+4\mathbbm{E}\left[\left.\left(Z_1(1)\right)^2\right|R_1,R_1^{(n)},R_{-1}^{(n)}\right]+\nu_2^2\right]\\
=&3\left(\sup_{w\in D}\frac{\|Dg(w)\|}{1+\|w\|^2}\right)\left[1+4R_1(1)+\nu_2^2\right]\leq \left(\frac{9}{2}+3\nu_2^2\right)\left(\sup_{w\in D}\frac{\|Dg(w)\|}{1+\|w\|^2}\right)\\
&\text{D)}\quad \mathbbm{E}\left[\left.\sup_{w\in D}\frac{\left|g\left(Z_1-w+I_n\right)\right|}{1+\|w\|^3}\right|R_1,R_1^{(n)}, R_{-1}^{(n)}\right]\\
\leq&\mathbbm{E}\left[\left.\sup_{w\in D}\frac{\left|g\left(Z_1-w+I_n\right)\right|}{1+\left\|Z_1-w+I_n\right\|^3}\cdot \frac{1+\left\|Z_1-w+I_n\right\|^3}{1+\|w\|^3}\right|R_1,R_1^{(n)},R_{-1}^{(n)}\right]\\
\stackrel{Doob}\leq &9\left(\sup_{w\in D}\frac{|g(w)|}{1+\|w\|^3}\right)\left[1+\frac{27}{8}\mathbbm{E}\left[\left.\left|Z_1(1)\right|^3\right|R_1,R_1^{(n)},R_{-1}^{(n)}\right]+\nu_2^3\right]\\
\leq& 9\left(\sup_{w\in D}\frac{|g(w)|}{1+\|w\|^3}\right)\left[1+\frac{27\sqrt{2}}{4\sqrt{\pi}}R_1(1)^{3/2}+\nu_2^3\right]\\
\leq& \left(9+\frac{243}{64\sqrt{\pi}}+9\nu_2^3\right)\left(\sup_{w\in D}\frac{|g(w)|}{1+\|w\|^3}\right).
\end{align*}
Therefore:
\begin{equation}\label{11.34}
\mathbbm{E}\left[\|g^{(-1)}\|_M|R_1,R_1^{(n)},R_{-1}^{(n)}\right]\leq \left(9+\frac{243}{64\sqrt{\pi}}+\nu_2+3\nu_2^2+9\nu_2^3\right)\|g\|_M.
\end{equation}
Also:
\begin{align*}
&\text{A)}\quad\mathbbm{E}\left[\left.\sup_{w,h\in D}\frac{\left\|D^2g\left(Z_1-Z_{-1}+(w+h)\right)-D^2g\left(Z_1-Z_{-1}+w\right)\right\|}{\|h\|}\right|R_1,R_1^{(n)},R_{-1}^{(n)}\right]\\
\leq&\sup_{x,y\in D}\frac{\|D^2g(x+y)-D^2g(x)\|}{\|y\|}\\
&\text{B)}\quad \mathbbm{E}\left[\left.\sup_{w\in D}\frac{\left\|D^2g\left(Z_1-Z_{-1}+w\right)\right\|}{1+\|w\|}\right|R_1,R_1^{(n)},R_{-1}^{(n)}\right]\\
\leq&\mathbbm{E}\left[\left.\sup_{w\in D}\frac{\left\|D^2g\left(Z_1-Z_{-1}+w\right)\right\|}{1+\left\|Z_1-Z_{-1}+w\right\|}\cdot \frac{1+\left\|Z_1-Z_{-1}+w\right\|}{1+\|w\|}\right|R_1,R_1^{(n)},R_{-1}^{(n)}\right]\\
\stackrel{Doob}\leq &\left(\sup_{w\in D}\frac{\|D^2g(w)\|}{1+\|w\|}\right)\left[1+2\sqrt{\mathbbm{E}\left[\left.\left(Z_1(1)-Z_{-1}(1)\right)^2\right| R_1,R_1^{(n)},R_{-1}^{(n)}\right]}\right]\\
=&\left(\sup_{w\in D}\frac{\|D^2g(w)\|}{1+\|w\|}\right)\left[1+2\sqrt{2R_1(1)}\right]\\
\leq &2\left(\sup_{w\in D}\frac{\|D^2g(w)\|}{1+\|w\|}\right)\\
&\text{C)}\quad \mathbbm{E}\left[\left.\sup_{w\in D}\frac{\left\|Dg\left(Z_1-Z_{-1}+w\right)\right\|}{1+\|w\|^2}\right|R_1,R_1^{(n)},R_{-1}^{(n)}\right]\\
\leq &\mathbbm{E}\left[\left.\sup_{w\in D}\frac{\left\|Dg\left(Z_1-Z_{-1}+w\right)\right\|}{1+\left\|Z_1-Z_{-1}+w\right\|^2}\cdot \frac{1+\left\|Z_1-Z_{-1}+w\right\|^2}{1+\|w\|^2}\right|R_1,R_1^{(n)},R_{-1}^{(n)}\right]\\
\stackrel{Doob}\leq &2\left(\sup_{w\in D}\frac{\|Dg(w)\|}{1+\|w\|^2}\right)\left[1+4\mathbbm{E}\left[\left.\left(Z_1(1)-Z_{-1}(1)\right)^2\right|R_1,R_1^{(n)},R_{-1}^{(n)}\right]\right]\\
=&2\left(\sup_{w\in D}\frac{\|Dg(w)\|}{1+\|w\|^2}\right)\left[1+8R_1(1)\right]\\
\leq &4\left(\sup_{w\in D}\frac{\|Dg(w)\|}{1+\|w\|^2}\right)\\
&\text{D)}\quad \mathbbm{E}\left[\left.\sup_{w\in D}\frac{\left|g\left(Z_1-Z_{-1}+w\right)\right|}{1+\|w\|^3}\right|R_1,R_1^{(n)}, R_{-1}^{(n)}\right]\\
\leq&\mathbbm{E}\left[\left.\sup_{w\in D}\frac{\left|g\left(Z_1-Z_{-1}+w\right)\right|}{1+\left\|Z_1-Z_{-1}+w\right\|^3}\cdot \frac{1+\left\|Z_1-Z_{-1}+w\right\|^3}{1+\|w\|^3}\right|R_1,R_1^{(n)},R_{-1}^{(n)}\right]\\
\stackrel{Doob}\leq &4 \left(\sup_{w\in D}\frac{|g(w)|}{1+\|w\|^3}\right)\left[1+\frac{27}{8}\mathbbm{E}\left[\left.\left|Z_1(1)-Z_{-1}(1)\right|^3\right|R_1,R_1^{(n)},R_{-1}^{(n)}\right]\right]\\
\leq &4\left(\sup_{w\in D}\frac{|g(w)|}{1+\|w\|^3}\right)\left[1+\frac{27\sqrt{2}}{4\sqrt{\pi}}(2R_1(1))^{3/2}\right]\\
\leq &\left(4+\frac{27\sqrt{2}}{8\sqrt{\pi}}\right)\left(\sup_{w\in D}\frac{|g(w)|}{1+\|w\|^3}\right).
\end{align*}
So:
\begin{equation}\label{11.35}
\mathbbm{E}\left[\|g^{(0)}\|_M|R_1,R_1^{(n)},R_{-1}^{(n)}\right]\leq \left(4+\frac{27\sqrt{2}}{8\sqrt{\pi}}\right)\|g\|_M.
\end{equation}

Now, the Moran model and the Wright-Fisher diffusion can be coupled using the Donnelly-Kurtz look-down construction (see the discussion below Theorem \ref{theorem_moran}). In this construction first the Wright-Fisher diffusion $M$ is realised and then the Moran model $M_n$ is constructed by describing $nM_n(s)$ as a $\textbf{Binomial}(n,M(s))$ random variable. Note that:
\begin{align}
&\text{A)}\quad\mathbbm{E}\sqrt{\|R_1-R_1^{(n)}\|}\nonumber\\
\leq& \mathbbm{E}\sqrt{\int_0^1 \left|\frac{1}{2}X(s)(1-X(s))-\left(\frac{1}{2}X_n(s)+\frac{\nu_2}{n}\right)\left(1-X_n(s)\right)\right|ds}\nonumber\\
\stackrel{Jensen}\leq& \sqrt{\int_0^1 \mathbbm{E}\left|\frac{1}{2}X(s)(1-X(s))-\left(\frac{1}{2}X_n(s)+\frac{\nu_2}{n}\right)\left(1-X_n(s)\right)\right|ds}\nonumber\\
\leq& \left\lbrace \int_0^1 \left(\frac{1}{2}\mathbbm{E}\left[\mathbbm{E}[|X_n(s)-X(s)||X(s)]\right]+\frac{1}{2}\mathbbm{E}\left[\mathbbm{E}[|X_n^2(s)-X^2(s)||X(s)]\right]\right.\right.\nonumber\\
&\left.\left.+\frac{\nu_2}{n}\mathbbm{E}\left[\mathbbm{E}[|1-X_n(s)||X(s)]\right]\right)ds\right\rbrace^{1/2}\nonumber\\
\leq& \left\lbrace\int_0^1 \left(\frac{1}{2}\mathbbm{E}\left[\sqrt{\text{Var}\left[\left.X_n(s)\right|X(s)\right]}\right]+\frac{1}{2}\mathbbm{E}\left[\sqrt{\text{Var}\left[\left.X^2_n(s)\right|X(s)\right]}\right]\right.\right.\nonumber\\
&\left.\left.+\frac{1}{2}\mathbbm{E}\left[\frac{X(s)(1-X(s))}{n}\right]+\frac{\nu_2}{n}\mathbbm{E}[1-X(s)]\right)ds\right\rbrace^{1/2}\nonumber\\
=&\left\lbrace\int_0^1 \left(\frac{1}{2}\mathbbm{E}\left[\sqrt{\frac{X(s)(1-X(s))}{n}}\right]+\frac{1}{2}\mathbbm{E}\left[\left(\frac{X(s)(1-7X(s)+6nX(s))}{n^3}\right.\right.\right.\right.\nonumber\\
&+\frac{X(s)(12X^2(s)-20nX^2(s)+4n^2X^2(s)-6X^3(s)+10nX^3(s)}{n^3}\nonumber\\
&\left.\left.\left.\left.-\frac{8X^4(s)}{n}\right)^{1/2}\right]+\mathbbm{E}\left[\frac{(X(s)+2\nu_2)(1-X(s))}{2n}\right]\right)ds\right\rbrace^{1/2}\nonumber\\
\leq& \sqrt{\frac{1}{4}n^{-1/2}+\frac{1}{2}\sqrt{13n^{-3}+16n^{-2}+4n^{-1}}+\frac{1+2\nu_2}{2}n^{-1}}\nonumber\\
&\text{B)}\quad \mathbbm{E}\|R_1-R_1^{(n)}\|^{3/2}\nonumber\\
\leq&\mathbbm{E}\left[\left(\int_0^1 \left|\frac{1}{2}X(s)(1-X(s))-\left(\frac{1}{2}X_n(s)+\frac{\nu_2}{n}\right)(1-X_n(s))\right|ds\right)^{3/2}\right]\nonumber\\
\stackrel{\text{H\"older}}\leq&\mathbbm{E}\left[\int_0^1 \left|\frac{1}{2}X(s)(1-X(s))-\left(\frac{1}{2}X_n(s)+\frac{\nu_2}{n}\right)(1-X_n(s))\right|^{3/2}ds\right]\nonumber\\
\stackrel{\text{Jensen}}\leq&\sqrt{3}\int_0^1\left\lbrace\frac{\sqrt{2}}{4}\mathbbm{E}\left[\left.\mathbbm{E}\left|X_n(s)-X(s)\right|^{3/2}\right|X(s)\right]\right.\nonumber\\
&+\frac{\sqrt{2}}{4}\mathbbm{E}\left[\left.\mathbbm{E}\left|X_n^2(s)-X^2(s)\right|^{3/2}\right|X(s)\right]\nonumber\\
&\left.+\frac{\nu_2^{3/2}}{n^{3/2}}\mathbbm{E}\left[\mathbbm{E}\left[|1-X_n(s)|^{3/2}|X(s)\right]\right]\right\rbrace ds\nonumber\\
\stackrel{\text{Jensen}}\leq& \sqrt{3}\int_0^1\left\lbrace\frac{\sqrt{2}}{4}\mathbbm{E}\left[\left(\text{Var}\left[\left.X_n(s)\right|X(s)\right]\right)^{3/4}\right]\right.\nonumber\\
&+\frac{1}{2}\mathbbm{E}\left[\left.\mathbbm{E}\left|X_n^2(s)-\left(\frac{X(s)(1-X(s))}{n}+X^2(s)\right)\right|^{3/2}\right|X(s)\right]\nonumber\\
&+\frac{1}{2}\mathbbm{E}\left[\left(\frac{X(s)(1-X(s))}{n}\right)^{3/2}\right]\nonumber\\
&\left.+\frac{\nu_2^{3/2}}{n^{3/2}}\mathbbm{E}\left[\left(\frac{(1-X(s))X(s)}{n}+\left(1-X(s)\right)^2\right)^{3/4}\right]\right\rbrace ds\nonumber\\
\leq& \sqrt{3}\int_0^1\left\lbrace\frac{\sqrt{2}}{4}\mathbbm{E}\left[\left(\text{Var}\left[\left.X_n(s)\right|X(s)\right]\right)^{3/4}\right]+\left(\frac{1}{16}+\nu_2^{3/2}\right)n^{-3/2}\right.\nonumber\\
&\left.+\frac{1}{2}\mathbbm{E}\left[\left.\mathbbm{E}\left|X_n^2(s)-\left(\frac{X(s)(1-X(s))}{n}+X^2(s)\right)\right|^{3/2}\right|X(s)\right]\right\rbrace ds\nonumber\\
\stackrel{\text{Jensen}}\leq&\sqrt{3}\int_0^1\left\lbrace\frac{\sqrt{2}}{4}\mathbbm{E}\left[\left(\text{Var}\left[\left.X_n(s)\right|X(s)\right]\right)^{3/4}\right]\right.\nonumber\\
&\left.+\frac{1}{2}\mathbbm{E}\left[\left(\text{Var}\left[\left.X_n^2(s)\right|X(s)\right]\right)^{3/4}\right]+\left(\frac{1}{16}+\nu_2^{3/2}\right)n^{-3/2}\right\rbrace ds \nonumber\\
=&\sqrt{3}\int_0^1\left\lbrace\frac{\sqrt{2}}{4}\mathbbm{E}\left[\left(\frac{X(s)(1-X(s))}{n}\right)^{3/4}\right]+\frac{1}{2}\mathbbm{E}\left[\left(\frac{X(s)(1-7X(s))}{n^3}\right.\right.\right.\nonumber\\
&+\frac{X(s)(6nX(s)+12X^2(s)-20nX^2(s)+4n^2X^2(s)-6X^3(s))}{n^3}\nonumber\\
&\left.\left.\left.+\frac{X(s)(10nX^3(s)-8n^2X^3(s))}{n^3}\right)^{3/4}\right]+\left(\frac{1}{16}+\nu_2^{3/2}\right)n^{-3/2}\right\rbrace ds\nonumber\\
\leq& \sqrt{3}\left(\frac{\sqrt{2}}{32}n^{-3/4}+\frac{1}{2}\left(13n^{-3}+16n^{-2}+4n^{-1}\right)^{3/4}+\left(\frac{1}{16}+\nu_2^{3/2}\right)n^{-3/2}\right)\nonumber\\
&\text{C)}\quad\mathbbm{E}\sqrt{\|R_{-1}-R_{-1}^{(n)}\|}\nonumber\\
\leq& \mathbbm{E}\sqrt{\int_0^1 \left|\frac{1}{2}X(s)(1-X(s))-\left(\frac{1}{2}(1-X_n(s))+\frac{\nu_1}{n}\right)X_n(s)\right|ds}\nonumber\\
\stackrel{Jensen}\leq& \sqrt{\int_0^1 \mathbbm{E}\left|\frac{1}{2}X(s)(1-X(s))-\left(\frac{1}{2}(1-X_n(s))+\frac{\nu_1}{n}\right)X_n(s)\right|ds}\nonumber\\
\leq& \left\lbrace \int_0^1 \left(\frac{1}{2}\mathbbm{E}\left[\mathbbm{E}[|X_n(s)-X(s)||X(s)]\right]+\frac{1}{2}\mathbbm{E}\left[\mathbbm{E}[|X_n^2(s)-X^2(s)||X(s)]\right]\right.\right.\nonumber\\
&\left.\left.+\frac{\nu_1}{n}\mathbbm{E}\left[\mathbbm{E}[X_n(s)|X(s)]\right]\right)ds\right\rbrace^{1/2}\nonumber\\
\leq& \left\lbrace\int_0^1 \left(\frac{1}{2}\mathbbm{E}\left[\sqrt{\text{Var}\left[\left.X_n(s)\right|X(s)\right]}\right]+\frac{1}{2}\mathbbm{E}\left[\sqrt{\text{Var}\left[\left.X^2_n(s)\right|X(s)\right]}\right]\right.\right.\nonumber\\
&\left.\left.+\frac{1}{2}\mathbbm{E}\left[\frac{X(s)(1-X(s))}{n}\right]+\frac{\nu_1}{n}\mathbbm{E}[X(s)]\right)ds\right\rbrace^{1/2}\nonumber\\
=&\left\lbrace\int_0^1 \left(\frac{1}{2}\mathbbm{E}\left[\sqrt{\frac{X(s)(1-X(s))}{n}}\right]+\frac{1}{2}\mathbbm{E}\left[\left(\frac{X(s)(1-7X(s)+6nX(s))}{n^3}\right.\right.\right.\right.\nonumber\\
&+\frac{X(s)(12X^2(s)-20nX^2(s)+4n^2X^2(s)-6X^3(s)+10nX^3(s))}{n^3}\nonumber\\
&\left.\left.\left.\left.-\frac{8X^4(s)}{n}\right)^{1/2}\right]+\mathbbm{E}\left[\frac{X(s)(1-X(s)+2\nu_1)}{2n}\right]\right)ds\right\rbrace^{1/2}\nonumber\\
\leq& \sqrt{\frac{1}{4}n^{-1/2}+\frac{1}{2}\sqrt{13n^{-3}+16n^{-2}+4n^{-1}}+\frac{1+2\nu_1}{2}n^{-1}}\nonumber\\
&\text{D)}\quad \mathbbm{E}\|R_{-1}-R_{-1}^{(n)}\|^{3/2}\nonumber\\
\leq&\mathbbm{E}\left[\left(\int_0^1 \left|\frac{1}{2}X(s)(1-X(s))-\left(\frac{1}{2}(1-X_n(s))+\frac{\nu_1}{n}\right)X_n(s)\right|ds\right)^{3/2}\right]\nonumber\\
\stackrel{\text{H\"older}}\leq&\mathbbm{E}\left[\int_0^1 \left|\frac{1}{2}X(s)(1-X(s))-\left(\frac{1}{2}(1-X_n(s))+\frac{\nu_1}{n}\right)X_n(s)\right|^{3/2}ds\right]\nonumber\\
\stackrel{Jensen}\leq&\sqrt{3}\int_0^1\left\lbrace\frac{\sqrt{2}}{4}\mathbbm{E}\left[\left.\mathbbm{E}\left|X_n(s)-X(s)\right|^{3/2}\right|X(s)\right]\right.\nonumber\\
&+\frac{\sqrt{2}}{4}\mathbbm{E}\left[\left.\mathbbm{E}\left|X_n^2(s)-X^2(s)\right|^{3/2}\right|X(s)\right]\nonumber\\
&\left.+\frac{\nu_1^{3/2}}{n^{3/2}}\mathbbm{E}\left[\mathbbm{E}\left[|X_n(s)|^{3/2}|X(s)\right]\right]\right\rbrace ds\nonumber\\
\stackrel{\text{Jensen}}\leq& \sqrt{3}\int_0^1\left\lbrace\frac{\sqrt{2}}{4}\mathbbm{E}\left[\left(\text{Var}\left[\left.X_n(s)\right|X(s)\right]\right)^{3/4}\right]\right.\nonumber\\
&\left.+\frac{1}{2}\mathbbm{E}\left[\left.\mathbbm{E}\left|X_n^2(s)-\left(\frac{X(s)(1-X(s))}{n}+X^2(s)\right)\right|^{3/2}\right|X(s)\right]\right.\nonumber\\
&+\frac{1}{2}\mathbbm{E}\left[\left(\frac{X(s)(1-X(s))}{n}\right)^{3/2}\right]\nonumber\\
&\left.+\frac{\nu_1^{3/2}}{n^{3/2}}\mathbbm{E}\left[\left(\frac{(1-X(s))X(s)}{n}+X^2(s)\right)^{3/4}\right]\right\rbrace ds\nonumber\\
\leq& \sqrt{3}\int_0^1\left\lbrace\frac{\sqrt{2}}{4}\mathbbm{E}\left[\left(\text{Var}\left[\left.X_n(s)\right|X(s)\right]\right)^{3/4}\right]+\left(\frac{1}{16}+\nu_1^{3/2}\right)n^{-3/2}\right.\nonumber\\
&\left.+\frac{1}{2}\mathbbm{E}\left[\left.\mathbbm{E}\left|X_n^2(s)-\left(\frac{X(s)(1-X(s))}{n}+X^2(s)\right)\right|^{3/2}\right|X(s)\right]\right\rbrace ds\nonumber\\
\stackrel{\text{Jensen}}\leq&\sqrt{3}\int_0^1\left\lbrace\frac{\sqrt{2}}{4}\mathbbm{E}\left[\left(\text{Var}\left[\left.X_n(s)\right|X(s)\right]\right)^{3/4}\right]+\frac{1}{2}\mathbbm{E}\left[\left(\text{Var}\left[\left.X_n^2(s)\right|X(s)\right]\right)^{3/4}\right]\right.\nonumber\\
&+\left.\left(\frac{1}{16}+\nu_1^{3/2}\right)n^{-3/2}\right\rbrace ds \nonumber\\
=&\sqrt{3}\int_0^1\left\lbrace\frac{\sqrt{2}}{4}\mathbbm{E}\left[\left(\frac{X(s)(1-X(s))}{n}\right)^{3/4}\right]+\frac{1}{2}\mathbbm{E}\left[\left(\frac{X(s)(1-7X(s))}{n^3}\right.\right.\right.\nonumber\\
&+\frac{X(s)(6nX(s)+12X^2(s)-20nX^2(s)+4n^2X^2(s)-6X^3(s))}{n^3} \nonumber\\
&\left.\left.\left.+\frac{X(s)(10nX^3(s)-8n^2X^3(s))}{n^3}\right)^{3/4}\right]+\left(\frac{1}{16}+\nu_1^{3/2}\right)n^{-3/2}\right\rbrace ds\nonumber\\
\leq& \sqrt{3}\left(\frac{\sqrt{2}}{32}n^{-3/4}+\frac{1}{2}\left(13n^{-3}+16n^{-2}+4n^{-1}\right)^{3/4}+\left(\frac{1}{16}+\nu_1^{3/2}\right)n^{-3/2}\right)\label{11.36}.
\end{align}
Also:
\begin{align}
&\text{A)}\quad\mathbbm{E}\|I-I_n\|\leq \int_0^1(\nu_1+\nu_2)\mathbbm{E}\left[\mathbbm{E}\left[|X_n(s)-X(s)||X(s)\right]\right]ds\nonumber\\
\leq& \int_0^1(\nu_1+\nu_2)\mathbbm{E}\left[\sqrt{\text{Var}\left[X_n(s)|X(s)\right]}\right]ds\nonumber\\
\leq& \int_0^1(\nu_1+\nu_2)\mathbbm{E}\left[\sqrt{\frac{X(s)(1-X(s))}{n}}\right]ds\leq\frac{1}{2}(\nu_1+\nu_2)n^{-1/2}\nonumber\\
&\text{B)}\quad\mathbbm{E}\|I-I_n\|^3\stackrel{\text{H\"older}}\leq \int_0^1\mathbbm{E}\left|(\nu_1+\nu_2)\left(X_n(s)-X(s)\right)\right|^3ds\nonumber\\
\leq &(\nu_1+\nu_2)^3\int_0^1\left[\mathbbm{E}\left[\left(X_n(s)-X(s)\right)^4\right]\right]^{3/4}ds\nonumber\\
\leq& (\nu_1+\nu_2)^3\int_0^1 \mathbbm{E}\left[\left(\frac{X(s)(1-7X(s)+7nX(s)+12X^2(s)-18nX^2(s)}{n^4}\right.\right.\nonumber\\
&\left.\left.+\frac{X(s)(6n^2X^2(s)-6X^3(s)+11nX^3(s)-6n^2X^3(s)+n^3X^3(s))}{n^4}\right)^{3/4}\right]ds \nonumber\\
\leq& (\nu_1+\nu_2)^3 \left(13n^{-4}+18n^{-3}+6n^{-2}+n^{-1}\right)^{3/4}.
\label{11.37}
\end{align}
We now combine (\ref{11.29}),(\ref{11.31}), (\ref{11.32}), (\ref{11.33}), (\ref{11.34}), (\ref{11.35}), (\ref{11.36}), (\ref{11.37}) to obtain:
\begin{align*}
&|\mathbbm{E}g(M)-\mathbbm{E}g(M_n)|\\
\leq& \|g\|_M\left[9+2\sqrt{\frac{1}{8}+\frac{\nu_1}{n}}+\frac{12\nu_1}{n}+\frac{243}{8}\cdot \left(\frac{3}{64}+\frac{6\nu_1+1}{8n}+\frac{\nu_1+\nu_1^2}{n^2}\right)^{3/4}+\nu_2+3\nu_2^2+9\nu_2^3\right]\\
&\cdot\left\lbrace \left(2+\frac{27\sqrt{2}}{16\sqrt{\pi}}\right)\sqrt{\frac{1}{4}n^{-1/2}+\frac{1}{2}\sqrt{13n^{-3}+16n^{-2}+4n^{-1}}+\frac{1+2\nu_2}{2}n^{-1}}\right.\\
&+\frac{27\sqrt{2}}{2\sqrt{\pi}}\sqrt{3}\left(\frac{\sqrt{2}}{32}n^{-3/4}+\frac{1}{2}\left(13n^{-3}+16n^{-2}+4n^{-1}\right)^{3/4}+\left(\frac{1}{16}+\nu_2^{3/2}\right)n^{-3/2}\right)\\
&+ n^{-1}\left[\frac{30+\frac{27}{4}\cdot 5^{1/3}}{\sqrt{\pi\log 2}}\sqrt{\log (n^2/4)}+\left(1+\left(\frac{3}{2}\right)^3\sqrt{\frac{2}{\pi}}\left(\frac{1}{8}+\frac{\nu_2}{n}\right)^{3/2}\right)\right.\\
&\cdot\left(\frac{1}{8}+\frac{\nu_2}{n}\right)(1+2e^{-1})\left.+1+\frac{\log(1+2e^{-1})+4\log n}{\log\log(n^2+2)}\right]\\
&+n^{-2}\frac{9\sqrt{\frac{1}{8}+\frac{\nu_2}{n}}}{2}\left(1+\frac{3}{8}n^2+3\nu_2n\right)^{1/2}\left[4+\frac{16701+1024(\log n)^3}{(\log \log (n^2+3))^3}\right]^{1/3}\\
&+\left.n^{-3}\left[\frac{2160}{\sqrt{\pi}(\log 2)^{3/2}}(\log (n^2/4+\nu_2n))^{3/2}+8+\frac{33402+2048(\log n)^3}{(\log\log (n^2+3))^3}\right]\right\rbrace\\
&+\|g\|_M\left(9+\frac{243}{64\sqrt{\pi}}+\nu_2+3\nu_2^2+9\nu_2^3\right)\\
&\cdot\left\lbrace \left(2+\frac{27\sqrt{2}}{16\sqrt{\pi}}\right)\sqrt{\frac{1}{4}n^{-1/2}+\frac{1}{2}\sqrt{13n^{-3}+16n^{-2}+4n^{-1}}+\frac{1+2\nu_1}{2}n^{-1}}\right.\\
&+\frac{27\sqrt{2}}{2\sqrt{\pi}}\sqrt{3}\left(\frac{\sqrt{2}}{32}n^{-3/4}+\frac{1}{2}\left(13n^{-3}+16n^{-2}+4n^{-1}\right)^{3/4}+\left(\frac{1}{16}+\nu_1^{3/2}\right)n^{-3/2}\right)\\
&+ n^{-1}\left[\frac{30+\frac{27}{4}\cdot 5^{1/3}}{\sqrt{\pi\log 2}}\sqrt{\log (n^2/4)}+\left(1+\left(\frac{3}{2}\right)^3\sqrt{\frac{2}{\pi}}\left(\frac{1}{8}+\frac{\nu_1}{n}\right)^{3/2}\right)\right.\\
&\cdot\left(\frac{1}{8}+\frac{\nu_1}{n}\right)(1+2e^{-1})\left.+1+\frac{\log(1+2e^{-1})+4\log n}{\log\log(n^2+2)}\right]\nonumber\\
&+n^{-2}\frac{9\sqrt{\frac{1}{8}+\frac{\nu_1}{n}}}{2}\left(1+\frac{3}{8}n^2+3\nu_1n)\right)^{1/2}\left[4+\frac{16701+1024(\log n)^3}{(\log \log (n^2+3))^3}\right]^{1/3}\\
&\left.+n^{-3}\left[\frac{2160}{\sqrt{\pi}(\log 2)^{3/2}}(\log (n^2/4+\nu_1n))^{3/2}+8+\frac{33402+2048(\log n)^3}{(\log\log(n^2+3))^3}\right]\right\rbrace\\
&+\|g\|_M\left(4+\frac{27\sqrt{2}}{8\sqrt{\pi}}\right)\\
&\cdot\left[\frac{1}{2}(1+2\nu_2)(\nu_1+\nu_2)n^{-1/2}+2(\nu_1+\nu_2)^3 \left(13n^{-4}+18n^{-3}+6n^{-2}+n^{-1}\right)^{3/4}\right]\\
\leq& \|g\|_M\left\lbrace \left(18+\nu_1^{1/2}+47\nu_1^{3/4}+31\nu_1^{3/2}+\nu_2+3\nu_2^2+9\nu_2^3\right)\right.\\
&\cdot\left(1.02\cdot 10^6+425\nu_2^{1/2}+623\nu_2+39\nu_2^{3/2}+7\nu_2^{5/2}\right)\\
&+ \left(12+3\nu_2+3\nu_2^2+9\nu_2^3\right)\left(1.02\cdot 10^6+425\nu_1^{1/2}+623\nu_1+39\nu_1^{3/2}+7\nu_1^{5/2}\right)\\
&\left.+7\left(\frac{1}{2}(1+2\nu_2)(\nu_1+\nu_2)+31(\nu_1+\nu_2)^3\right)\right\rbrace n^{-1/4}\\
&+\|g\|_M2112\left[\left(18+\nu_1^{1/2}+47\nu_1^{3/4}+31\nu_1^{3/2}+\nu_2+3\nu_2^2+9\nu_2^3\right)\left(\log\left(n^2/4+\nu_2n\right)\right)^{3/2}\right.\\
&+\left.\left(12+3\nu_2+3\nu_2^2+9\nu_2^3\right)\left(\log\left(n^2/4+\nu_1n\right)\right)^{3/2}\right]n^{-3}.\qed
\end{align*}
\begin{remark}
The term of order $n^{-1/4}$ appearing in the bound obtained in Theorem \ref{theorem_moran} is unexpected. It comes from the comparison of the two time changes $R_1^{(n)}$ and $R_1$ applied at certain points in the proof to the Poisson process and to Brownian Motion respectively. 

One would expect $n^{-1}\sqrt{\log n^2}$ to be closer to the true speed of convergence. This is because our process $M_n$ can be expressed as a difference of two scaled Poisson processes with parameters of order $n^2$, which resemble scaled random walks stopped at the $n^2$-th step. Order $n^{-1}\sqrt{\log n^2}$ would therefore be in line with the results presented in \cite{diffusion} in the context of scaled random walks and indeed with Theorem \ref{time_change_theorem}.

We can therefore guess that our bound is not sharp, yet a method avoiding the comparison of the aforementioned time changes is needed to improve it.
\end{remark}
\begin{remark}
A strategy similar to the one used in the proof of Theorem \ref{theorem_moran} may be used to obtain bounds on the distance between other continuous-time Markov chains and diffusions. A key ingredient in the proof is, however, a way of coupling the two for any fixed time.
\end{remark}

\section*{Appendix: Proof of Proposition \ref{prop_m1}}\label{suppB}
Note that the proof of Proposition 3.1 of \cite{functional_combinatorial} readily applies in this case up to and excluding (3.4) and it suffices to prove that $\liminf_{n\to\infty}\mathbbm{P}[Y_n\in B]\geq \mathbbm{P}[Z\in B]$ for all sets $B$ of the form $B=\bigcap_{1\leq l\leq L}B_l$, where $B_l=\lbrace w\in D:\|w-s_l\|<\gamma_l\rbrace$, $s_l\in C\left([0,1],\mathbbm{R} \right)$ and $\gamma_l$ is such that $\mathbbm{P}[Z\in \partial B_l]=0$.

We will condition on the fact that the minimum holding time (interval of constancy of $Y_n$) is of length greater that $r_n=\lambda_n^{-3}$. It follows from Theorems 2.1 and 2.2 of Chapter 5 in \cite{devroye} that if we condition on the number of holding times being equal to $i$, their lengths are distributed uniformly over the simplex $A_i=\left\lbrace (x_1,...,x_i):x_j\geq 0, \sum_{j=1}^i x_j\leq 1\right\rbrace$. Note that the probability of the minimum of them being greater or equal to $r_n$ is $(1-ir_n)^i$ if $i\leq r_n$ and $0$ otherwise. This is because $\text{Vol}(A_i)=\frac{1}{i!}$ and $\text{Vol}\left(\left\lbrace (x_1,...,x_i):x_j\geq r_n, \sum_{j=1}^i x_j\leq 1\right\rbrace\right)=\frac{(1-ir_n)^i}{i!}$. Therefore:
\begin{align}
&\mathbbm{P}\left[\text{minimal waiting time}\geq r_n\right]\nonumber\\
=&\sum_{i=1}^\infty \mathbbm{P}\left[\text{minimal waiting time}\geq r_n|\# \text{waiting times}=i\right]\mathbbm{P}\left[\# \text{waiting times}=i\right]\nonumber\\
=&\sum_{i=1}^{\lfloor\lambda_n^3\rfloor}(1-i\lambda_n^{-3})^ie^{-\lambda_n}\frac{(\lambda_n)^{i-1}}{(i-1)!}\xrightarrow{n\to \infty} 1.\label{13}
\end{align}
To see this note the following:
\begin{align*}
\text{A)} \sum_{i=\left\lceil\lambda_n^{5/4}\right\rceil}^{\lfloor \lambda_n^3\rfloor}(1-i\lambda_n^{-3})^ie^{-\lambda_n}\frac{(\lambda_n)^{i-1}}{(i-1)!}
&\leq e^{-\lambda_n}\left(\lambda_n^3-\lambda_n^{5/4}\right)\left(1-\lambda_n^{-7/4}\right)^{\lambda_n^{5/4}}\frac{\lambda_n^{\lambda_n^{5/4}-1}}{\left(\left\lceil \lambda_n^{5/4}\right\rceil-1\right)!}\\
&\leq\frac{\lambda_n^2\left\lceil\lambda_n^{5/4}\right\rceil}{e^{\lambda_n}}\cdot \lambda_n^{-\frac{1}{8}\left\lceil \lambda_n^{5/4}\right\rceil+\frac{9}{8}\left\lceil \lambda_n^{9/8}\right\rceil}\xrightarrow{n\to\infty}0
\end{align*}
\begin{align*}
\text{B)}\sum_{i=1}^{\left\lceil\lambda_n^{5/4}\right\rceil-1}(1-i\lambda_n^{-3})^ie^{-\lambda_n}\frac{(\lambda_n)^{i-1}}{(i-1)!}
\geq (1-\lambda_n^{-7/4})^{\left\lceil \lambda_n^{5/4}\right\rceil}e^{-\lambda_n}\sum_{i=1}^{\left\lceil\lambda_n^{5/4}\right\rceil-1}\frac{(\lambda_n)^{i-1}}{(i-1)!}\xrightarrow{n\to\infty}1,
\end{align*}
where the convergence in B) holds since $(1-\lambda_n^{-7/4})^{\left\lceil \lambda_n^{5/4}\right\rceil}\to 1$, $e^{-\lambda_n}\sum_{i=1}^{\infty}\frac{(\lambda_n)^{i-1}}{(i-1)!}=1$ and:

\begin{align*}
&e^{-\lambda_n}\sum_{i=\left\lceil\lambda_n^{5/4}\right\rceil}^{\infty}\frac{(\lambda_n)^{i-1}}{(i-1)!}\leq e^{-\lambda_n}\frac{\lambda_n^{\left\lceil\lambda_n^{5/4}\right\rceil}}{\left\lceil\lambda_n^{5/4}\right\rceil!}\cdot\frac{\left\lceil\lambda_n^{5/4}\right\rceil+1}{\left\lceil\lambda_n^{5/4}\right\rceil+1-\lambda_n}\xrightarrow{n\to\infty}0
\end{align*}
for instance, by Proposition A.2.3(ii) of \cite{janson}.
Furthermore, note that for $g_{l,n}^*$ defined by (3.6) in \cite{functional_combinatorial}:

\begin{align}
 &\liminf_{n\to\infty}\mathbbm{E}\left[\prod_{l=1}^L g^*_{l,n}(Y_n)\right]\nonumber\\
=&\liminf_{n\to\infty}\mathbbm{E}\left[\left.\prod_{l=1}^L g^*_{l,n}(Y_n)\right|\text{minimal waiting time}\geq r_n\right]\mathbbm{P}\left[\text{minimal waiting time}\geq r_n\right]\nonumber\\
 &+\liminf_{n\to\infty}\mathbbm{E}\left[\left.\prod_{l=1}^L g^*_{l,n}(Y_n)\right|\text{minimal waiting time}< r_n\right]\mathbbm{P}\left[\text{minimal waiting time}< r_n\right]\nonumber\\
  =&\liminf_{n\to\infty}\mathbbm{E}\left[\left.\prod_{l=1}^L g^*_{l,n}(Y_n)\right|\text{minimal waiting time}\geq r_n\right]\mathbbm{P}\left[\text{minimal waiting time}\geq r_n\right]\label{300}\\
 \text{because:}\nonumber\\
0\leq& \liminf_{n\to\infty}\mathbbm{E}\left[\left.\prod_{l=1}^L g^*_{l,n}(Y_n)\right|\text{minimal waiting time}< r_n\right]\mathbbm{P}\left[\text{minimal waiting time}< r_n\right]\nonumber\\
\leq& \liminf_{n\to\infty}\mathbbm{P}\left[\text{minimal waiting time}< r_n\right]\stackrel{(\ref{13})}=0.\nonumber
\end{align}
Following the same steps as in \cite{functional_combinatorial}, we obtain:
\begin{align*}
&\liminf_{n\to\infty}\mathbbm{P}\left[Y_n\in B\right]\geq \liminf_{n\to\infty}\mathbbm{P}\left[Y_n\in B\text{ and minimal waiting time}\geq r_n\right]\\
&\geq \liminf_{n\to\infty}\mathbbm{E}\left[\left.\prod_{l=1}^L g^*_{l,n}(Y_n)\right|\text{minimal waiting time}\geq r_n\right]\mathbbm{P}\left[\text{minimal waiting time}\geq r_n\right]\\
&\stackrel{(\ref{assumption}),(\ref{300})}{\geq}
\liminf_{n\to\infty}\left\lbrace\mathbbm{E}\left[\prod_{l=1}^L g^*_{l,n}(Z_n)\right]-C\tau_n\left\|\prod_{l=1}^L g^*_{l,n}\right\|_{M^0}\right\rbrace\\
&\geq\liminf_{n\to\infty}\left\lbrace\mathbbm{E}\left[\prod_{l=1}^L g^*_{l,n}(Z_n)\right]-C''\tau_n{p_n}^2(\epsilon\gamma)^{-2}\eta_n^{-3}\right\rbrace\\
&\stackrel{\text{Fatou}}{\geq} \mathbbm{E}\left[\liminf_{n\to\infty}\prod_{l=1}^L g^*_{l,n}(Z_n)\right]\geq\mathbbm{P}\left[\bigcap_{1\leq l\leq L}(\|Z-s_l\|<\gamma_l(1-\theta))\right]
\text{.}
\end{align*}
\qed
\section*{Acknowledgements}
The author would like to thank Professor Gesine Reinert, Professor Alison Etheridge and Professor Andrew Barbour for many helpful discussions and Professor Gesine Reinert for constructive comments on the early versions of the paper. The author is also grateful to Dr Sebastian Vollmer for pointing out a mistake in an early version of the proof of Proposition \ref{prop2.10}.

\bibliographystyle{alpha}
\bibliography{Bibliography}

\end{document}